\newtheorem{theorem}{Theorem}[section]
\newtheorem{lemma}[theorem]{Lemma}
\newtheorem{corollary}[theorem]{Corollary}
\theoremstyle{definition}
\newtheorem{definition}[theorem]{Definition}
\theoremstyle{remark}
\newtheorem{remark}[theorem]{Remark}
\theoremstyle{remark}
\newtheorem{example}[theorem]{Example}
\theoremstyle{remark}
\theoremstyle{remark}
\newcommand{\cC}{{\mathcal C}}
\newcommand{\Zb}{{\mathbb Z}}
\newcommand{\Rb}{{\mathbb R}}
\newcommand{\Nb}{{\mathbb N}}
\newcommand{\map}{{\rm map}}
\newcommand{\aut}{\operatorname{Aut}}
\newcommand{\gl}{\operatorname{GL}}
\newcommand{\id}{\operatorname{id}}
\newcommand{\homeo}{\operatorname{Homeo}}
\renewcommand{\phi}{\varphi}
\begin{document}

\title{Cantor systems and quasi-isometry of groups}

\author{Kostya Medynets}
\address{K.M., Department of Mathematics, U.S. Naval Academy,  Annapolis, MD 21402, United States.}
\email{medynets@usna.edu}

\author{Roman Sauer}
\address{R.S., Institute of Algebra and Geometry, Karlsruhe Institute of Technology, 76131 Karlsruhe, Germany.}
\email{roman.sauer@kit.edu}

\author{Andreas Thom}
\address{A.T., Institute of Geometry, TU Dresden, 01069 Dresden, Germany.}
\email{andreas.thom@tu-dresden.de}

\subjclass[2010]{37B99 (primary), and 20F65 (secondary)}

\onehalfspace

\begin{abstract}
The purpose of this note is twofold. In the first part we observe that two finitely generated non-amenable groups are quasi-isometric if and only if they admit topologically orbit equivalent Cantor minimal actions. In particular, free groups of different rank admit topologically orbit equivalent Cantor minimal actions -- unlike in the measurable setting. In the second part we introduce the measured orbit equivalence category of a Cantor minimal system and construct (in certain cases) a representation of this category on the category of finite-dimensional vector spaces. This gives rise to novel fundamental invariants of the orbit equivalence relation together with an ergodic invariant probability measure.
\end{abstract}

\maketitle

\tableofcontents

\section{Introduction}

This note is about a relationship between ergodic theory of group actions and geometric group theory. In geometric group theory, finitely generated groups are typically studied from a coarse point of view as metric spaces up to quasi-isometry -- a weakening of the usual notion of isomorphism or commensurability. On the other side, in the study of group actions on compact metric spaces, the notion of topological orbit equivalence arises in a natural way as a weakening of the notion of conjugacy of actions. Based on Gromov's dynamical characterization of quasi-isometry~\cite{gromov}*{0.2.C2′} we show that two finitely generated non-amenable groups are quasi-isometric if and only if they admit topologically orbit equivalent Cantor minimal systems. As a particular consequence, we can conclude that non-abelian free groups of different rank admit topologically orbit equivalent Cantor minimal systems. This is in harsh contrast to the analogous measurable setting, where the measurable orbit equivalence relation (of probability measure preserving systems) remembers the rank of the free group, see Gaboriau's seminal work~\cite{MR1919400}.

In the second part of this paper, starting with Section \ref{sec: orcat}, we study self-orbit equivalences of a Cantor minimal system. To this end, we introduce a category $m{\mathcal C}(R)$ which is called the measured orbit equivalence category. Its objects are group actions that induce an isomorphic orbit equivalence relation $R$ endowed with an ergodic invariant probability measure. Morphisms are suitable intertwiners given by a measure-preserving homeomorphism and a cocycle, see Section~\ref{sec: orcat}. In Section~\ref{sec: functors}  we then construct various functors from the category $m{\mathcal C}(R)$ to the category of vector spaces. In the case that $R$ is induced by a free action of a nilpotent group, these functors land in the category of finite-dimensional vector spaces and thus yield a linear representation of the category $m{\mathcal C}(R)$. In the last two sections we study in more detail the special situation when the nilpotent group is just $\Zb^d$ and also consider the realization problem, i.e., determine the potential image of the functor. The main results from these sections are Theorems~\ref{thm: full group representation in nilpotent case}, \ref{thm: full group representation in abelian case} and \ref{thm: realization}.
In Section~\ref{sec: f	d} we end the discussion with an outline of further directions.

\section{Basic definitions}

Let $X$ stand for a standard Cantor set, e.g.~$X=\{0,1\}^{\mathbb N}$. 
Let $\Gamma$ be a (not necessarily countable) group, and let $\pi$ be a faithful action of $\Gamma$ on $X$ by homeomorphisms, that is, a monomorphism $\pi\colon\Gamma\hookrightarrow\homeo(X)$. 
We denote this data by $(X,\Gamma, \pi)$ and call it a \emph{Cantor dynamical system}. Since we are only considering faithful actions, we may and will identify $\Gamma$ with its image in $\homeo(X)$ and denote the dynamical system 
-- by slight abuse of notation -- just by $(X,\Gamma)$. Accordingly, 
we often denote the group action by $(g,x) \mapsto gx$, suppressing $\pi$ in the 
notation when the group action is clear from the context. 

We denote by
$[[X, \Gamma]]$ or $[[\Gamma]]$ (depending on whether $X$ is clear from the context)
the set of all homeomorphisms $T$ of $X$ for which there exists $k \in \mathbb N$, a clopen partition  of $X= A_1\sqcup\ldots \sqcup A_k$, and group elements
$g_1,\ldots,g_k\in \Gamma$ such that $T|_{A_i} = g_i|_{A_i}$ for every $i=1,\ldots,k$. It is clear that $[[\Gamma]]$ is again a group -- it is called the \emph{topological full group} of the dynamical system $(X,\Gamma)$.
A clopen set $U\subset X$ is called \emph{wandering} if $gU\cap U= \emptyset$ holds for all group elements $g\neq 1$.

In many natural situations, the topological full group is a complete invariant of the topological orbit equivalence relation, see Definition \ref{toporbeq} for terminology. This can be made precise as follows:

\begin{theorem}[see \cite{Me2011}]  \label{theomed}
Let $(X,\Gamma)$ and $(Y,\Lambda)$ be Cantor dynamical systems. Assume that the dynamical system $(X,\Gamma)$ and $(Y,\Lambda)$ have no wandering clopen sets and no orbits of length one and two.
Suppose that there is a group isomorphism $\eta\colon [[X,\Gamma]] \rightarrow [[Y,\Lambda]]$. Then there is a homeomorphism $\phi_\eta : X\rightarrow Y$ such that $\eta(g) = \phi_\eta \circ g \circ \phi_\eta^{-1}$ for every $g\in [[X,\Gamma]]$. In particular, the dynamical system $(X,\Gamma)$ and $(Y,\Lambda)$ are topologically orbit equivalent.
\end{theorem}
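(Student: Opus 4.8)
The statement is a spatial realization (reconstruction) result: an abstract isomorphism of topological full groups is forced to be implemented by a single homeomorphism. The plan is to recover the Cantor set $X$ together with the action from the purely algebraic structure of $[[X,\Gamma]]$, in a way that is automatically transported by the isomorphism $\eta$. To this end I would first reconstruct the Boolean algebra of clopen subsets of $X$ from the group alone, and then invoke Stone duality.

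The essential device is the supply of involutions in $[[\Gamma]]$ together with their supports $\operatorname{supp}(T)=\overline{\{x : Tx\neq x\}}$, which are clopen. The key lemma is a purely group-theoretic characterization of when two elements have disjoint supports; commuting is necessary but not sufficient, so one detects disjointness through the structure of centralizers of involutions (e.g. via conditions asserting that every involution commuting with one also commutes with the other). Here the hypotheses enter decisively: the absence of wandering clopen sets guarantees that every nonempty clopen $U$ supports nontrivial elements returning into itself, and the absence of orbits of length one and two provides genuine involutions and $3$-cycles supported in arbitrarily small clopen sets. Together these give enough local elements both to separate points and to realize every clopen set as a support.

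Using the disjointness relation together with the partial order given by inclusion of supports, I would reconstruct the lattice $\cB(X)$ of clopen sets and then recover the points of $X$ as suitable ultrafilters, equivalently as maximal families of supports with the finite-intersection property. This identifies $X$ with the Stone space of the group-theoretically defined Boolean algebra. Since every step is phrased entirely in terms of the abstract group, the isomorphism $\eta\colon[[X,\Gamma]]\to[[Y,\Lambda]]$ carries $\cB(X)$ isomorphically onto $\cB(Y)$, and by Stone duality this lattice isomorphism is induced by a homeomorphism $\phi_\eta\colon X\to Y$. For the intertwining relation, note that for $g\in[[X,\Gamma]]$ the homeomorphisms $\eta(g)$ and $\phi_\eta\circ g\circ\phi_\eta^{-1}$ act identically on $\cB(Y)$ by naturality of the construction; since an element of a topological full group is determined by its induced action on the clopen algebra, they coincide, which gives $\eta(g)=\phi_\eta\circ g\circ\phi_\eta^{-1}$. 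Because $\Gamma\subseteq[[X,\Gamma]]$ and the $\Gamma$-orbits coincide with the $[[X,\Gamma]]$-orbits, conjugation by $\phi_\eta$ maps orbits to orbits, which is exactly topological orbit equivalence.

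The main obstacle is the second step: pinning down the correct first-order definition of disjoint supports and verifying, using precisely the wandering-set and short-orbit hypotheses, that the full group contains enough small-support involutions to reconstruct the topology faithfully. This is a Rubin-type local-density argument and is where essentially all of the technical work concentrates. In fact, one may alternatively deduce the entire statement from Rubin's general reconstruction theorem, once the action of $[[X,\Gamma]]$ on $X$ has been shown to be locally dense; the hypotheses excluding wandering sets and short orbits are exactly what is needed to establish that local density.
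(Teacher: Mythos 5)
You should first be aware that the paper itself contains no proof of Theorem~\ref{theomed}: it is imported verbatim from \cite{Me2011}, so the only possible comparison is with the reconstruction argument of that reference. Your overall plan -- recover the Boolean algebra of clopen sets from the abstract group via supports of involutions, apply Stone duality, and obtain the intertwining relation from naturality together with the fact that a homeomorphism of a Cantor set is determined by its action on clopen sets -- is the right family of argument (it is the reconstruction scheme of Bezuglyi--Medynets, Rubin, and \cite{Me2011}), and your last two steps (intertwining, and deducing orbit equivalence from $\Gamma\subseteq[[X,\Gamma]]$) are correct as sketched.

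However, in the part that you yourself flag as the heart of the matter there are two concrete errors, which show that the plan as formulated cannot prove the theorem in its stated generality. First, it is not true that the absence of wandering clopen sets, even combined with the absence of orbits of length one and two, guarantees nontrivial elements of $[[X,\Gamma]]$ supported in every nonempty clopen set, let alone involutions and $3$-cycles supported in arbitrarily small ones. A nontrivial element of $[[X,\Gamma]]$ supported in a clopen set $U$ exists if and only if some $\Gamma$-orbit meets $U$ in at least two points, and the hypotheses do not provide this: take $Y$ a Cantor set, $X=Y\times\{1,2,3\}$, and $\Gamma=C(Y,S_3)$ the group of locally constant maps acting by $f\cdot(y,i)=(y,f(y)(i))$. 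Every orbit has exactly three points, and no clopen set is wandering because every point of $X$ is fixed by some nontrivial $f$ (choose $f$ supported over a cylinder of $Y$ avoiding the given point); yet $U=Y\times\{1\}$ meets every orbit exactly once, so the only element of $[[X,\Gamma]]$ supported in $U$, or in any clopen subset of $U$, is the identity. Second, and for the same reason, the proposed fallback to Rubin's theorem fails: local density is impossible at any point with a finite orbit, since the closure of the orbit of $x$ under locally supported elements is then a finite set, which has empty interior in a perfect space; the hypotheses of Theorem~\ref{theomed} allow finite orbits of any length at least three, and in the example above all orbits are finite. Your Rubin reduction does work for minimal systems (which is the only case the rest of the paper actually uses), because there the topological full group is easily seen to be locally dense; but the theorem as stated is far more general. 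So the ``key lemma'' you defer is not a routine local-density verification: in the form you state it, it is false under the given hypotheses, and the actual proof in \cite{Me2011} must reconstruct $X$ by a finer algebraic analysis of the supports that do exist.
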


\begin{remark} This result remains valid if we replace the topological full group $[[X,\Gamma]]$  by the \emph{full group} $[X,\Gamma]$, which consists of all homeomorphisms of $X$ preserving $\Gamma$-orbits.
\end{remark}

\begin{remark}\label{rem: uniqueness}
The homeomorphism $\phi_\eta$ in the previous theorem is unique, which also justifies its notation. Indeed, suppose that the homeomorphisms $\phi$ and $\psi$ satisfy $\eta(g)=\phi\circ g\circ \phi^{-1}$
and $\eta(g)=\psi\circ g\circ \psi^{-1}$ for $g\in [[X,\Gamma]]$, and suppose that $\phi\ne \psi$. Then there is an open subset $U\subset X$ such that
$\phi(U)\cap \psi(U)=\emptyset$. Possibly by decreasing $U$, we find $h=\eta(g)\in [[Y,\Gamma]]$ such that $h(x)=x$ for all $x\in \psi(U)$ and such that there exists
$\phi(x_0)\in \phi(U)$ such that $h(\phi(x_0))\ne \phi(x_0)$. In particular, we obtain that
$\psi(g(x_0))=h(\psi(x_0))=\psi(x_0)$ and $\phi(g(x_0))=h(\phi(x_0))\ne \phi(x_0)$. Since $\phi$ and $\psi$ are bijective, this is a contradiction.
\end{remark}

%
%
%
%
%

\begin{remark}
Every element $g \in [[X,\Gamma]]$ acts on $[[X,\Gamma]]$ by conjugation, i.e., there exists a homomorphism ${\rm ad} \colon [[X,\Gamma]] \to \aut([[X,\Gamma]])$ with ${\rm ad}(g)(h)=ghg^{-1}$. For $g\in [[X,\Gamma]]$ the homeomorphism $\phi_{\rm ad(g)}$ in the previous theorem is just $g$ itself considered as a homeomorphism of $X$. 
\end{remark}

Our goal is to relate geometric group theory (quasi-isometries) to topological dynamics (topological orbit equivalence). Next we recall the relevant notions. 

Let $R$ and $R'$ be equivalence relations on topological spaces $X$ and $X'$, 
respectively. Then $R$ and $R'$ are called \emph{orbit equivalent} if there is a 
homeomorphism $h\colon X\to X'$ -- called the \emph{orbit equivalence} -- such that $(h\times h)(R)=R'$. Assume additionally 
that $R$ and $R'$ are the orbit equivalence relations of free continuous actions 
of $\Gamma$ on $X$ and $\Lambda$ on $X'$. The \emph{cocycles} $\alpha\colon \Gamma\times X\to \Lambda$ and $\beta\colon \Lambda\times X'\to \Gamma$ of an orbit equivalence $h\colon X\to X'$ between $R$ and $R'$ are defined by the relations 
\begin{equation}\label{eq: cocycle relations}
\alpha(\gamma, x)h(x)=h(\gamma x)~\text{ and }~\beta(\lambda, x')h^{-1}(x')=h^{-1}(\lambda x').
\end{equation}
\begin{definition} \label{toporbeq}
	Two free continuous actions of groups $\Gamma$ on $X$ and $\Lambda$ on $X'$ are called \emph{topologically orbit equivalent} if there is an orbit equivalence $h\colon X\to X'$ of the corresponding orbit equivalence relations whose cocycles are continuous. In that case $h$ is called a \emph{topological orbit equivalence}. 
\end{definition}

Next we briefly recall some notions from geometric group theory to fix the notation.
Let $\Gamma$ be a finitely generated group and $S \subset \Gamma$ be a finite symmetric generating set. Recall that a subset $S \subset \Gamma$ is called symmetric if $S^{-1} := \{s^{-1} \mid s \in S \}$ coincides with $S$. By the choice of $S$ one
defines the \emph{length function} $\ell_S$ on $\Gamma$ by
\[\ell_S(g) := \min\{ n \in \Nb \mid \exists_{s_1,\dots,s_n \in S}~ g=s_1 \cdots s_n \}.\]
It is clear that $\ell_S(gh) \leq \ell_S(g) + \ell_S(h)$ for all $g,h \in \Gamma$, and $\ell_S(g)=0$ if and only if $g=e$. Using the length function,
the \emph{word metric} on $\Gamma$ is given by
\[d_{\Gamma}(g,h) := \ell_S(g^{-1}h).\]
Note that $d_{\Gamma}$ is left-invariant, i.e.~$d_{\Gamma}(kg,kh) = d_{\Gamma}(g,h)$ for all $g,h,k \in \Gamma$. Note also that $d_{\Gamma}$ depends on $S$, but we will suppress this dependence in the notation.

\begin{definition}
If $\Gamma$, $\Lambda$ are finitely generated groups with word metrics $d_{\Gamma}$ and $d_{\Lambda}$, then a
map $\varphi \colon \Gamma \to \Lambda$ is called
\begin{enumerate}
\item[i)] a \emph{quasi-isometry} if there exists a constant $C>0$, such that
$$C^{-1} \cdot d_{\Gamma}(g,h) -C \leq  d_{\Lambda}(\varphi(g),\varphi(h)) \leq C \cdot d_{\Gamma}(g,h) + C$$
for all $g,h \in \Gamma$, and for all $k \in \Lambda$, there exists $g \in \Gamma$ with $d_{\Lambda}(\varphi(g),k) \leq C$;
\item[ii)] a \emph{bi-Lipschitz equivalence} if $\varphi$ is bijective and there exists a constant $C$, such that
$$C^{-1} \cdot d_{\Gamma}(g,h) \leq  d_{\Lambda}(\varphi(g),\varphi(h)) \leq C \cdot d_{\Gamma}(g,h)$$
for all $g,h \in \Gamma$. We call $C$ a Lipschitz constant for $\varphi$.
\end{enumerate}
Two maps $\varphi,\psi \colon \Gamma \to \Lambda$ are said to have \emph{bounded distance} if there exists a constant $C>0$ such that $d_{\Lambda}(\varphi(g),\psi(g))\leq C$ for all $g \in \Gamma$.
\end{definition}

It is clear from the definition that two groups which are bi-Lipschitz equivalent are also quasi-isometric. The converse is not true in general, see \cite{MR2730576}. However, if one group is (and hence both are) non-amenable, the two notions agree.

\begin{theorem}[Whyte, \cite{Whyte}] \label{whyte} Let $\Gamma, \Lambda$ be non-amenable finitely generated groups. Every quasi-isometry is within bounded distance to a bi-Lipschitz equivalence.
\end{theorem}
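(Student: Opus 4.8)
The plan is to produce, by a uniformly bounded perturbation of the given quasi-isometry $\varphi\colon\Gamma\to\Lambda$, an honest \emph{bijection} $\psi\colon\Gamma\to\Lambda$; such a bijection turns out to be automatically bi-Lipschitz, and since $\sup_g d_\Lambda(\psi(g),\varphi(g))<\infty$ it witnesses exactly the assertion of the theorem. First I would record the two elementary consequences of the quasi-isometry axioms that drive everything. The lower bound $d_\Lambda(\varphi(g),\varphi(h))\ge C^{-1}d_\Gamma(g,h)-C$ forces $\varphi^{-1}(k)$ to lie in a ball of radius $C^2$, so $\varphi$ is uniformly at most $M$-to-one, where $M$ is the cardinality of a ball of radius $C^2$ in $\Gamma$. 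The coarse-density clause says that $\varphi(\Gamma)$ is $C$-dense in $\Lambda$. These are all the properties of $\varphi$ that the argument uses.

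Second, I would convert non-amenability into a quantitative expansion statement. Non-amenability of $\Lambda$ yields a $\delta>0$ with $|N_1(B)|\ge(1+\delta)|B|$ for every finite $B\subseteq\Lambda$, where $N_r(\cdot)$ denotes the $r$-neighbourhood in the word metric. Since $N_r(B)=N_1(N_{r-1}(B))$ and all these sets are finite, induction gives $|N_r(B)|\ge(1+\delta)^r|B|$; thus the neighbourhood operator multiplies cardinalities by an arbitrarily large factor once $r$ is large. This exponential expansion is the one place where non-amenability is genuinely used, and the crux of the proof is to play it off against the $M$-to-one defect of $\varphi$.

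Third, the combinatorial heart. For a radius $R$ to be fixed, I would form the bipartite graph $H_R$ on vertex classes $\Gamma$ and $\Lambda$ in which $g$ is joined to $k$ precisely when $d_\Lambda(\varphi(g),k)\le R$; since balls are finite, $H_R$ is locally finite, and both $\Gamma,\Lambda$ are countable. A perfect matching of $H_R$ is literally a bijection $\psi\colon\Gamma\to\Lambda$ with $d_\Lambda(\psi(g),\varphi(g))\le R$, i.e.\ the bounded perturbation we seek. I would produce one by verifying Hall's condition on both sides and invoking the matching theorem for countable locally finite bipartite graphs: one extracts a matching saturating each side by a König's-lemma compactness argument, and then splices the two into a perfect matching along the path-and-cycle components of their union (the Mendelsohn--Dulmage type argument). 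For finite $A\subseteq\Gamma$ the neighbourhood in $H_R$ is $N_R(\varphi(A))$, and $|N_R(\varphi(A))|\ge(1+\delta)^R|\varphi(A)|\ge(1+\delta)^R|A|/M$, which exceeds $|A|$ once $(1+\delta)^R\ge M$. For finite $B\subseteq\Lambda$ the neighbourhood is $\varphi^{-1}(N_R(B))$; here $C$-density gives, for each $y\in N_{R-C}(B)$, an element $g_y\in\varphi^{-1}(N_R(B))$ with $d_\Lambda(\varphi(g_y),y)\le C$, and the assignment $y\mapsto g_y$ is at most $v_\Lambda(2C)$-to-one, where $v_\Lambda(2C)$ is the cardinality of a radius-$2C$ ball in $\Lambda$. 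Hence $|\varphi^{-1}(N_R(B))|\ge(1+\delta)^{R-C}|B|/v_\Lambda(2C)\ge|B|$ once $R$ is large. Fixing $R$ large enough for both inequalities yields the perfect matching and thus $\psi$.

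Finally I would verify that any bijection $\psi$ with $\sup_g d_\Lambda(\psi(g),\varphi(g))\le R$ is bi-Lipschitz. The upper bound follows from the triangle inequality together with the upper quasi-isometry estimate, absorbing the additive constants into the multiplicative one since $d_\Gamma(g,h)\ge 1$ whenever $g\ne h$. For the lower bound, the quasi-isometry estimate already gives $d_\Lambda(\psi(g),\psi(h))\ge c\,d_\Gamma(g,h)$ once $d_\Gamma(g,h)$ exceeds some threshold $T$, while for $1\le d_\Gamma(g,h)\le T$ injectivity of $\psi$ gives $d_\Lambda(\psi(g),\psi(h))\ge 1\ge d_\Gamma(g,h)/T$; combining these constants finishes the estimate. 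I expect the main obstacle to be the two-sided Hall verification of the third step — especially the $B\subseteq\Lambda$ side, where coarse surjectivity and expansion must be balanced against the bounded-to-one defect — together with the passage from one-sided matchings to a genuine perfect matching in the infinite locally finite setting.
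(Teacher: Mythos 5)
The paper does not prove this statement at all; it is quoted as an external result from Whyte's paper, so the only meaningful comparison is with the cited source. Your argument is correct, and it is essentially Whyte's own proof: non-amenability of $\Lambda$ converted into exponential expansion $|N_r(B)|\ge(1+\delta)^r|B|$, Hall's marriage theorem for locally finite countable bipartite graphs applied to the graph joining $g\in\Gamma$ to the points of $\Lambda$ within distance $R$ of $\varphi(g)$ (with the two-sided Hall verification using uniform finite-to-one-ness on one side and coarse density plus expansion on the other, and the matching-splicing argument to get a perfect matching), followed by the elementary check that a bijection at bounded distance from a quasi-isometry is bi-Lipschitz. The only real difference is one of packaging: Whyte formulates the obstruction via uniformly finite homology in the sense of Block--Weinberger for general bounded-geometry spaces (a quasi-isometry $f$ is bounded distance from a bijection iff $f_*[X]=[Y]$ in $H_0^{uf}$, and $H_0^{uf}$ vanishes for non-amenable spaces), but the combinatorial engine underneath is exactly the Hall-type matching argument you give, so your specialization to Cayley graphs loses nothing for the statement at hand.
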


This was shown before for trees (and hence for free groups) by Papasoglu~\cite{MR1326733}, answering a question of Gromov~\cite{gromov}.

\section{Gromov's construction}

Let $\Gamma,\Lambda$ be finitely generated groups. We consider the space $\map(\Gamma,\Lambda)$ of all maps from $\Gamma$ to $\Lambda$ with the topology of pointwise convergence. The space $\map(\Gamma,\Lambda)$ is endowed with a natural $\Gamma \times \Lambda$-action, given by the formula
\[((g,\lambda)\varphi)(h) = \lambda \varphi(g^{-1}h),  \quad \forall g,h \in \Gamma,\lambda \in \Lambda, \varphi \in \map(\Gamma,\Lambda).\]

Let $\varphi \colon \Gamma \to \Lambda$ be a bi-Lipschitz equivalence and let $C$ be a Lipschitz constant for $\varphi$. We consider the set
$$\Omega := \overline{(\Gamma \times \Lambda) \varphi} \subset \map(\Gamma,\Lambda), \quad \mbox{and} \quad X := \{ \psi \in \Omega \mid \psi(e)=e\}.$$
Note that the $\Gamma \times \Lambda$-action restricts to the closed subset $\Omega$. The following theorem, going to back to Gromov, clarifies the relation between properties of elements in $\Omega$ and the action of $\Gamma \times \Lambda$ on $\Omega$.

\begin{theorem}[cf.~\citelist{\cite{gromov}*{0.2.C2′}\cite{shalom}*{Theorem~2.1.2}}]\label{thm: gromov construction}
Let $\varphi \colon \Gamma \to \Lambda$ be as above with Lipschitz constant $C$.
\begin{enumerate}
\item The set $\Omega$ consists of bi-Lipschitz equivalences with Lipschitz constant $C$.
\item  The set $X \subset \Omega$ is open and compact and a fundamental domain for both the $\Gamma$-action and the $\Lambda$-action.
\end{enumerate}
\end{theorem}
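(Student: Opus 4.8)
The plan is to establish the Lipschitz estimates and injectivity first (which are essentially formal), then the surjectivity of the limit maps (the crux), and finally the fundamental-domain bookkeeping for part (2). First I would record that every element of the orbit $(\Gamma\times\Lambda)\varphi$ is bi-Lipschitz with constant $C$: for $\psi=(g,\lambda)\varphi$ one has $\psi(h)=\lambda\varphi(g^{-1}h)$, and left-invariance of $d_\Lambda$ together with left-invariance of $d_\Gamma$ turns the defining inequalities for $\varphi$ into the same inequalities for $\psi$. Next I would observe that for each fixed pair $(h_1,h_2)$ the two-sided Lipschitz inequality is a \emph{closed} condition on $\map(\Gamma,\Lambda)$: since $\Lambda$ carries the discrete topology, pointwise convergence $\psi_n\to\psi$ forces $\psi_n(h_i)=\psi(h_i)$ for large $n$, so the inequality survives the limit. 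Intersecting over all pairs shows that the closed set $\Omega$ consists entirely of maps satisfying the Lipschitz estimates with constant $C$; the lower estimate immediately yields injectivity.

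The main obstacle is surjectivity, since surjectivity is \emph{not} a closed condition under pointwise limits. Here I would first reduce to the case $\psi\in X$: the $\Lambda$-action is left translation in the target, so $(e,\psi(e)^{-1})\psi\in X$, and left translation is a bijection of $\Lambda$, hence preserves surjectivity. For $\psi\in X$ (so $\psi(e)=e$), write $\psi=\lim_n\psi_n$ with each $\psi_n$ in the orbit, hence bijective. Given $\mu\in\Lambda$, pick $h_n$ with $\psi_n(h_n)=\mu$. The lower Lipschitz estimate gives $d_\Gamma(h_n,e)\le C\,d_\Lambda(\mu,\psi_n(e))$, and since $\psi_n(e)\to\psi(e)=e$ forces $\psi_n(e)=e$ for large $n$ (discreteness), the $h_n$ eventually lie in the \emph{finite} ball of radius $C\,d_\Lambda(\mu,e)$ about $e$. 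Passing to a subsequence on which $h_n=h$ is constant gives $\psi(h)=\lim_n\psi_n(h)=\mu$. This completes part (1).

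For part (2), note that $X=\Omega\cap\{\psi:\psi(e)=e\}$ is the intersection of $\Omega$ with a basic clopen set of $\map(\Gamma,\Lambda)$, so $X$ is clopen, in particular open, in $\Omega$. For compactness I would use the upper Lipschitz estimate: for $\psi\in X$ one has $d_\Lambda(\psi(h),e)=d_\Lambda(\psi(h),\psi(e))\le C\,d_\Gamma(h,e)$, so $\psi(h)$ ranges over the finite ball $B_\Lambda(e,C\,d_\Gamma(h,e))$; thus $X$ sits as a closed subset inside the compact product $\prod_{h\in\Gamma}B_\Lambda(e,C\,d_\Gamma(h,e))$ (Tychonoff) and is therefore compact. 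Finally I turn to the fundamental-domain property. A $\Lambda$-orbit meets $X$ exactly in the single point obtained via the unique translation $\lambda=\psi(e)^{-1}$. A $\Gamma$-orbit meets $X$ in the points $(g,e)\psi$ with $\psi(g^{-1})=e$, and here the bijectivity from part (1) does the decisive work: surjectivity provides existence of such a $g$, and injectivity provides its uniqueness.

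To conclude, I would check that both actions restricted to $\Omega$ are free—again a direct consequence of injectivity of the elements of $\Omega$—so that the translates of $X$ form a clopen partition of $\Omega$, confirming that $X$ is a genuine fundamental domain for each of the $\Gamma$- and $\Lambda$-actions. The only genuinely nontrivial input is the surjectivity of part (1), which feeds directly into the existence half of the $\Gamma$-fundamental-domain claim; everything else is a matter of closedness of conditions in the pointwise topology and a Tychonoff compactness argument exploiting that $\Lambda$ is discrete and its balls are finite.
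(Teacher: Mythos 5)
Your proof is correct and complete. Note that the paper itself gives no proof of this theorem---it delegates to the cited references (Gromov 0.2.C2$'$ and Shalom, Theorem~2.1.2)---and your argument is essentially the standard one found there: the Lipschitz bounds and injectivity pass to pointwise limits because $\Lambda$ is discrete, while the genuinely nontrivial step, surjectivity of limit maps, is handled exactly as it should be, via the lower Lipschitz bound and the finiteness of balls in a finitely generated group to pin the preimages $h_n$ inside a finite set. Your reduction to $X$ before running the surjectivity argument, the Tychonoff argument for compactness, and the use of bijectivity plus freeness to get the fundamental-domain property for the $\Gamma$-action are all sound (the implicit use of sequences rather than nets is harmless, since $\map(\Gamma,\Lambda)$ is metrizable for countable $\Gamma$ and $\Lambda$).
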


Similar to the measurable setting, the $\Gamma\times\Lambda$-space~$\Omega$
yields a topological orbit equivalence between the induced actions of $\Gamma$ on
$X\cong \Lambda\backslash\Omega$ and $\Lambda$ on $X\cong \Gamma\backslash\Omega$.
The $\Gamma$-action on $X$ is given by the formula
\[(g \cdot \psi)(h) = \psi(g^{-1})^{-1}\psi(g^{-1}h)\]
and the $\Lambda$-action is given by
\[(\lambda \cdot \psi)(h) = \lambda \psi \left(\psi^{-1}(\lambda^{-1})h \right).\]
Note that $g \cdot \psi \neq (g,e)\psi$ in general and similarly for the $\Lambda$-action. Moreover, for each $x \in X$, we have $\Gamma \cdot x = \Lambda \cdot x$. The maps
\[\alpha \colon \Gamma \times X \to \Lambda, \quad \alpha(g,\psi) = \psi(g^{-1})^{-1}\]
and
\[\beta \colon \Lambda \times X \to \Gamma, \quad \beta(\lambda ,\psi)= \left(\psi^{-1}(\lambda^{-1}) \right)^{-1}.\]
satisfy cocycle identities
\[\alpha(gh,\psi) = \alpha(g,h \cdot \psi) \alpha(h,\psi), \quad \forall g,h \in \Gamma, \psi \in X\]
and the relation~\eqref{eq: cocycle relations} for $h=\id$. Note, however, that whilst the action of $\Gamma$ (or $\Lambda$) on $\Omega$ is free, the induced action on $X$ is not necessarily free. 

It is obvious that $\alpha$ and $\beta$ are continuous. In particular, for fixed $g \in \Gamma$, the map $\alpha(g,\_) \colon X \to \Lambda$ takes only finitely many values and defines a clopen partition of $X$, so that the action of $g$ on each piece is given by the action of some element in $\Lambda$.

\begin{theorem}\label{main1}
  Let $\Gamma$ and $\Lambda$ be finitely generated groups. Then $\Gamma$ and $\Lambda$ admit free  continuous actions on the Cantor set that are topologically orbit equivalent if and only if $\Gamma$ and $\Lambda$ are bi-Lipschitz equivalent.
\end{theorem}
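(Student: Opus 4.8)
The plan is to prove the two implications separately: for \emph{bi-Lipschitz $\Rightarrow$ topologically orbit equivalent} I would run Gromov's construction (Theorem~\ref{thm: gromov construction}) and repair its two defects, and for the converse I would build a topological coupling, extract a quasi-isometry, and upgrade it to a bi-Lipschitz equivalence.

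For the forward direction I would start from a bi-Lipschitz equivalence $\varphi\colon\Gamma\to\Lambda$ with Lipschitz constant $C$, form $\Omega$ and the compact set $X=\{\psi\in\Omega\mid\psi(e)=e\}$, and use the continuous cocycles $\alpha,\beta$ so that the identity on $X$ is an orbit equivalence between the induced $\Gamma$- and $\Lambda$-actions. Two defects must be removed: the induced actions need not be free, and $X$ need not be perfect, hence need not be a Cantor set. I would fix both at once by a twisted product. Fix a Cantor set $Z$ carrying a free continuous action of $\Gamma\times\Lambda$ (such actions exist for every countable group) and define a $\Gamma$-action on $X\times Z$ by $g\cdot(\psi,z)=\bigl(g\cdot\psi,(g,\alpha(g,\psi))\cdot z\bigr)$ and a $\Lambda$-action by $\lambda\cdot(\psi,z)=\bigl(\lambda\cdot\psi,(\beta(\lambda,\psi),\lambda)\cdot z\bigr)$. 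The cocycle identities for $\alpha$ and $\beta$ make these genuine actions, and they are free because $g\mapsto(g,\alpha(g,\psi))$ and $\lambda\mapsto(\beta(\lambda,\psi),\lambda)$ are injective while $\Gamma\times\Lambda$ acts freely on $Z$. The essential point is that the two actions still have identical orbits: since every $\psi\in\Omega$ is a \emph{bijection} $\Gamma\to\Lambda$, the explicit formulas give $\beta(\alpha(g,\psi),\psi)=g$, whence $g\cdot(\psi,z)=\alpha(g,\psi)\cdot(\psi,z)$, and symmetrically. Thus the identity on $X\times Z$ is a topological orbit equivalence with continuous cocycles $(g,(\psi,z))\mapsto\alpha(g,\psi)$, and $X\times Z$ is a Cantor set because $Z$ is perfect.

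For the converse, suppose $h\colon X\to X'$ is a topological orbit equivalence of free actions $\Gamma\actson X$ and $\Lambda\actson X'$ with continuous cocycles $\alpha,\beta$. Transporting the $\Lambda$-action to $X$ through $h$, I would assemble the coupling $\Omega:=X\times\Lambda$ with the commuting actions $g\cdot(x,\lambda)=(gx,\alpha(g,x)\lambda)$ and $\mu\cdot(x,\lambda)=(x,\lambda\mu^{-1})$. Both actions are free, proper and cocompact, and -- crucially -- they share the \emph{single} clopen compact fundamental domain $F=X\times\{e\}$; freeness together with the duality $\beta(\alpha(g,x),h(x))=g$ shows $F$ meets each orbit of either action exactly once. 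Gromov's criterion then yields that $\Gamma$ and $\Lambda$ are quasi-isometric, and continuity of the cocycles produces uniform bounds $\ell_\Lambda(\alpha(s,x))\le M$ and $\ell_\Gamma(\beta(t,x'))\le N$ over the finite generating sets, giving the length comparison $N^{-1}\ell_\Gamma(g)\le\ell_\Lambda(\alpha(g,x))\le M\ell_\Gamma(g)$ for all $x$.

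I expect the upgrade from quasi-isometry to bi-Lipschitz equivalence to be the main obstacle. The naive candidate $g\mapsto\alpha(g,x_0)$ read off a single orbit is \emph{not} bi-Lipschitz: the value $\alpha(g,\cdot)$ can differ wildly at $x_0$ and $sx_0$ for large $g$, and this base-point dependence cannot be telescoped away. Instead I would exploit that $\Omega$ carries the \emph{same} fundamental domain for both groups -- precisely the extra rigidity beyond a generic coupling. When $\Gamma$ (equivalently $\Lambda$) is amenable there is a $\Gamma$-invariant probability measure $\mu_X$ on $X$, so $\nu=\mu_X\times\kappa$ (with $\kappa$ the counting measure on $\Lambda$) is invariant under both actions and assigns equal mass to every tile $gF$ and $\lambda F$; a Hall/Birkhoff-type matching in the incidence graph of the two tilings, which is supported in a bounded band by the length comparison above, then yields a bi-Lipschitz bijection $\Gamma\to\Lambda$. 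When $\Gamma$ is non-amenable the quasi-isometry already upgrades to a bi-Lipschitz equivalence by Whyte's theorem (Theorem~\ref{whyte}). I anticipate the amenable matching argument to be the most delicate step, since it is exactly here that the distinction between quasi-isometry and bi-Lipschitz equivalence is encoded.
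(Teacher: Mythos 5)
Your forward direction is correct and is essentially the paper's own argument: your twisted product action on $X\times Z$ is precisely the pair of induced $\Gamma$- and $\Lambda$-actions on the common compact open fundamental domain $X\times Z$ of the diagonal $\Gamma\times\Lambda$-action on $\Omega\times Z$, which is exactly how the paper forces freeness (the paper additionally passes to a minimal subsystem, which the bare statement does not require). The backward direction, however, contains a genuine gap, and it originates in a claim of yours that is false. You reject the map $\varphi(g)=\alpha(g^{-1},x)^{-1}$ on the grounds that its base-point dependence ``cannot be telescoped away''. It can: since $\alpha(h^{-1},x)^{-1}=\alpha(h,h^{-1}x)$, the cocycle identity gives $\varphi(g)^{-1}\varphi(h)=\alpha(g^{-1},x)\,\alpha(h,h^{-1}x)=\alpha(g^{-1}h,h^{-1}x)$, so the difference collapses to a \emph{single} cocycle value at a translated base point; writing $g^{-1}h=s_1\cdots s_n$ with $n=d_\Gamma(g,h)$ and iterating the cocycle identity yields $d_\Lambda(\varphi(g),\varphi(h))=\ell_{S'}\bigl(\alpha(g^{-1}h,h^{-1}x)\bigr)\le Cn$, where $C=\max\{\ell_{S'}(\alpha(s,y)) \mid s\in S,\ y\in X\}$ is finite by continuity of $\alpha$ and compactness of $X$. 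What the estimate needs is not small oscillation of $\alpha(g,\cdot)$ for large $g$ (which may indeed fail), but a Lipschitz bound for each \emph{generator} that is uniform over all base points, and that is exactly what continuity plus compactness provide. Together with bijectivity of $g\mapsto\alpha(g,x)$ (freeness of both actions plus equality of orbits) and the symmetric estimate for $\beta$, using $\beta(\alpha(g,x),x)=g$, this proves the backward direction for all groups at once, amenable or not; this is the paper's proof.

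Because you discard this route, you must close the amenable case by the Hall-type matching, and that step fails as written. A perfect matching in the incidence graph of the two tilings of $X\times\Lambda$ constrains the resulting bijection $\theta\colon\Gamma\to\Lambda$ only by $\theta(g)\in\alpha(g,X)$ (matched tiles intersect). For adjacent $g$ and $h=gs$ this gives $\theta(g)=\alpha(g,x_g)$ and $\theta(h)=\alpha(g,sx_h)\alpha(s,x_h)$ with \emph{unrelated} points $x_g,x_h\in X$, and nothing bounds $\ell_{S'}\bigl(\alpha(g,x_g)^{-1}\alpha(g,sx_h)\bigr)$: the diameter of $\alpha(g,X)$ is not uniformly bounded by any general principle (whether the cocycle, as a function of $x$, stays uniformly close to a single bi-Lipschitz map is closely related to the open problem raised in the paper's final section). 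Your ``bounded band'' control $N^{-1}\ell_\Gamma(g)\le\ell_\Lambda(\theta(g))\le M\ell_\Gamma(g)$ is mere norm comparability, which is far weaker than the bi-Lipschitz property; it does not even prevent $\theta$ from scrambling points within spheres. So the matching is neither shown to be bi-Lipschitz nor shown to be at bounded distance from the quasi-isometry $g\mapsto\alpha(g,x_0)$, and the amenable case -- which you yourself identify as the crux -- remains unproven. (Your non-amenable case via Whyte's theorem is fine, but the theorem concerns arbitrary finitely generated groups.) Ironically, the obstruction to your matching is exactly the base-point-dependence phenomenon you wrongly held against the direct construction; once one sees that the cocycle identity neutralizes it, the detour through couplings, Whyte's theorem and matchings becomes unnecessary.
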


The technique used in the proof below is the topological analog of the one in Furman's paper~\cite{furmanOE}*{Theorem~3.3.}. 

\begin{proof}
  (1) Suppose that $\Gamma$ and $\Lambda$ are bi-Lipschitz equivalent.  Now apply Gromov's construction above to obtain orbit equivalent actions.  In general, the action on $\Omega$ will not be free. However, due to~\cite{Hj2006} we may pick some free Cantor action $(Y,\Gamma \times \Lambda)$ and consider the diagonal action on $\Omega':=\Omega \times Y$ instead. It is clear that $X':=X \times Y$ is an open and compact fundamental domain for both the $\Gamma$-action and the $\Lambda$-action. Thus, $(\Gamma,X')$ and $(\Lambda,X')$ are free actions inducing the same orbit structure by the preceding paragraph. Forcing freeness in that way is an idea from~\cite{MR1919400}*{Theorem~2.3}. Note that we only relied on the second property of Theorem~\ref{thm: gromov construction} in the preceding paragraph and did not care about the origin of $\Omega$.

We may now pass to a minimal subsystem $Z \subset X'$ for both the $\Gamma$-action and the $\Lambda$-action -- as minimality is a property of the orbit structure. It is now apparent that $(\Gamma,Z)$ and $(\Lambda,Z)$ are topologically orbit equivalent Cantor minimal systems. This finishes the proof of the backward implication. 

(2)  Suppose that we have two finitely generated groups $\Gamma$ and $ \Lambda$, a Cantor set $X$, and topologically orbit equivalent free continuous actions $\Gamma \curvearrowright X$ and $\Lambda \curvearrowright X$. The orbit equivalence cocycle
$$\alpha \colon \Gamma \times X \to \Lambda$$ is then continuous. Let us pick a point $x \in X$ and set $\varphi(g) := \alpha(g^{-1},x)^{-1}$.

Let $S$ be a finite symmetric generating set for $\Gamma$ and $S'$ be a finite symmetric generating set of $\Lambda$. Let $g,h \in \Gamma$ be arbitrary. If $d_{\Gamma}(g,h)=n$, then $g^{-1}h = s_1 \cdots s_n$ for $s_1,\dots,s_n \in S$. First of all, note that
$$e=\alpha(hh^{-1},x) = \alpha(h,h^{-1}x)\alpha(h^{-1},x)$$
so that $\alpha(h^{-1},x)^{-1} = \alpha(h,h^{-1}x)$. Note that $\alpha(S \times X) \subset \Lambda$ is finite and set:
$$C:= \max \left\{ \ell_{S'}(\lambda) \mid \lambda \in \alpha(S \times X) \right\}.$$ We are now ready to prove the first inequality:
\begin{align*}
d_{\Lambda}(\varphi(g),\varphi(h)) = d_{\Lambda}(\alpha(g^{-1},x)^{-1},\alpha(h^{-1},x)^{-1})
&= \ell_{S'} (\alpha(g^{-1},x) \alpha(h,h^{-1}x)) \\
&= \ell_{S'} (\alpha(g^{-1}h,h^{-1}x)) \\
&= \ell_{S'} (\alpha(s_1 \cdots s_n,h^{-1}x)) \\
&\leq  \sum_{j=1}^n \ell_{S'} (\alpha(s_j, s_{j+1} \cdots s_n h^{-1}x)) \\
& \leq  C \cdot n = C \cdot d_{\Gamma}(g,h).
\end{align*}
The other inequality follows (possibly with a different constant) by symmetry. This finishes the proof.
\end{proof}

\begin{corollary} 
Let $\Gamma, \Lambda$ be non-amenable finitely generated groups. If the groups $\Gamma$ and $\Lambda$ are quasi-isometric, then there exist topologically orbit equivalent free Cantor minimal systems of $\Gamma$ and $\Lambda$.
\end{corollary}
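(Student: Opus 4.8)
The plan is to reduce the corollary directly to the two main results already at hand, namely Whyte's theorem (Theorem~\ref{whyte}) and Theorem~\ref{main1}. Since the hypotheses hand us a quasi-isometry between non-amenable groups, the only work is to upgrade quasi-isometry to bi-Lipschitz equivalence and then feed this into the construction of Theorem~\ref{main1}, taking care that the resulting systems come out minimal.

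First I would fix a quasi-isometry $\varphi\colon\Gamma\to\Lambda$, which exists by hypothesis. Because $\Gamma$ and $\Lambda$ are non-amenable, Theorem~\ref{whyte} provides a bi-Lipschitz equivalence $\psi\colon\Gamma\to\Lambda$ lying at bounded distance from $\varphi$. In particular $\Gamma$ and $\Lambda$ are bi-Lipschitz equivalent, so the hypothesis of the backward implication of Theorem~\ref{main1} is satisfied.

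Next I would invoke Theorem~\ref{main1}: bi-Lipschitz equivalence yields free continuous actions of $\Gamma$ and $\Lambda$ on a Cantor set that are topologically orbit equivalent. To secure minimality—which is what the corollary asserts—I would follow the construction in the proof of Theorem~\ref{main1} verbatim: after forcing freeness by taking the product with a free Cantor action of $\Gamma\times\Lambda$, one passes to a minimal subsystem $Z$ for the common orbit structure. Since both minimality and topological orbit equivalence are properties of the orbit structure alone, $(\Gamma,Z)$ and $(\Lambda,Z)$ are then topologically orbit equivalent free Cantor minimal systems, as required.

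The argument presents no genuine obstacle; the one point requiring a moment's attention is that the bare statement of Theorem~\ref{main1} produces free continuous actions rather than explicitly minimal ones, so one must recall that its proof already performs the passage to a minimal subsystem. That refinement is exactly what the corollary needs, and so the combination of the two cited theorems completes the proof.
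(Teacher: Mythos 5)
Your proposal is correct and follows essentially the same route as the paper: apply Whyte's theorem to upgrade the quasi-isometry to a bi-Lipschitz equivalence, then feed that into Theorem~\ref{main1}. Your extra remark that minimality comes from the proof of Theorem~\ref{main1} (the passage to a minimal subsystem) rather than from its bare statement is a correct reading of what the paper's terse proof implicitly relies on.
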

\begin{proof}
Use Theorem \ref{whyte} to construct a bi-Lipschitz equivalence between $\Gamma$ and $\Lambda$ and apply the previous result. 
\end{proof}

Since finitely generated non-abelian free groups are bi-Lipschitz equivalent to each other~\cite{MR1326733}, we immediately obtain that they admit topologically orbit equivalent actions.

\begin{corollary}
Finitely generated non-abelian free groups of different rank have topologically 	orbit equivalent free actions.
\end{corollary}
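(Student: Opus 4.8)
The plan is to deduce the statement from Theorem~\ref{main1} (or, equally well, from the preceding corollary), using the single geometric input that all finitely generated non-abelian free groups form one bi-Lipschitz class. Concretely, if $F_n$ and $F_m$ denote the free groups of ranks $n\ne m$ with $n,m\geq 2$, I would produce a bi-Lipschitz equivalence $\varphi\colon F_n\to F_m$ and then invoke the backward implication of Theorem~\ref{main1} to obtain topologically orbit equivalent free continuous actions on a Cantor set.

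First I would recall that, with respect to a free basis, the Cayley graph of $F_k$ is the regular tree $T_{2k}$ of valence $2k$. For $n,m\geq 2$ the trees $T_{2n}$ and $T_{2m}$ are quasi-isometric, since any two regular trees of valence at least $3$ are quasi-isometric (one matches successive spheres at a coarse scale); hence $F_n$ and $F_m$ are quasi-isometric. Since free groups of rank at least $2$ are non-amenable, Whyte's theorem (Theorem~\ref{whyte}) then upgrades this quasi-isometry to a bi-Lipschitz equivalence $\varphi\colon F_n\to F_m$. Alternatively one cites Papasoglu~\cite{MR1326733} directly, who proves that quasi-isometric trees are bi-Lipschitz equivalent, bypassing the amenability discussion.

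With $\varphi$ in hand, Theorem~\ref{main1} finishes the argument with no further work: feeding $\varphi$ into Gromov's construction produces free continuous actions of $F_n$ and $F_m$ on a common Cantor set with continuous orbit equivalence cocycles, which is exactly a topological orbit equivalence. Equivalently, having observed that $F_n$ and $F_m$ are quasi-isometric and non-amenable, one may simply apply the preceding corollary to obtain even topologically orbit equivalent free Cantor \emph{minimal} systems.

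The main (and essentially only) obstacle is the bi-Lipschitz equivalence of regular trees of different valence; this is the step that genuinely uses the tree structure, or equivalently non-amenability, and it is where the contrast with the measurable setting originates. Everything downstream of it is formal and already packaged in Theorem~\ref{main1}, so no additional calculation is required beyond verifying that the standard free bases realize the Cayley graphs as the asserted regular trees.
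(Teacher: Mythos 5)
Your proposal is correct and takes essentially the same route as the paper: the paper likewise invokes Papasoglu~\cite{MR1326733} to get that finitely generated non-abelian free groups are bi-Lipschitz equivalent and then applies the backward implication of Theorem~\ref{main1}, with the Whyte/quasi-isometry variant already packaged in the preceding corollary. No gaps.
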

Note that the actions in the previous corollary cannot have an invariant finite Borel measure. Indeed, any preserved measure would yield a measurable orbit equivalence and thus imply that the cost and the $\ell^2$-Betti numbers 
of the two groups would agree according to Gaboriau's work in~\cites{mercurial, gaboriau-betti}. 

Topological orbit equivalence as in Theorem~\ref{main1} and thus bi-Lipschitz equivalence of finitely generated groups is a strong condition that implies quasi-isometry. This is known to imply, for example, that cohomological dimensions agree provided they are finite~\cite{sauer} and that (under suitable finiteness conditions) for all $k \in \Nb$ the $k$-th $\ell^2$-Betti number of one groups vanishes if and only if it vanishes for the other due to an unpublished result of Pansu (see also~\cite{sauer+schroedl}). In the next section, we will discuss how symmetries of the orbit equivalence relation yield symmetries of certain cohomological invariants of the acting groups.

As a consequence of Theorem \ref{main1} we can now apply quasi-isometry rigidity results to the study of topological orbit equivalence of groups. A group $\Gamma$ is called \emph{quasi-isometrically rigid} if and only if every  group that is quasi-isometric to $\Gamma$ is already virtually isomorphic to $\Gamma$. 
Further, a class $C$ of groups is \emph{quasi-isometrically rigid} if every group that is quasi-isometric to a group in $C$ is 
virtually isomorphic to a group in $C$. This applies (by the work of many authors) to free groups, free abelian groups, the class of nilpotent groups, the class of fundamental groups of closed (compact, without boundary) surfaces, the class of fundamental groups of closed (compact, without boundary) $3$-dimensional manifolds, the class of finitely-presentable groups, the class of hyperbolic groups, the class of amenable groups, the class of fundamental groups of closed $n$-dimensional hyperbolic manifolds, the class of discrete co-compact subgroups in a simple non-compact Lie group, and many more examples~\cite{MR3329727}.

In the amenable case, bi-Lipschitz equivalence is a stronger condition than quasi-isometry, so that even finer information about topological orbit equivalence of different amenable groups can be obtained. See~\cite{MR2730576} for a discussion of bi-Lipschitz equivalence versus quasi-isometry for amenable groups.

\section{The orbit equivalence category}
\label{sec: orcat}
In this section we study two novel invariants of an orbit equivalence relation that we will call the \emph{topological and measured orbit equivalence category}, respectively. In certain cases, we obtain a representation of this category by measure preserving linear transformations on certain finite-dimensional cohomological invariants of the acting groups.

\begin{definition}
	The category $m\cC$ is defined as follows. Its objects are quadruples $(\Gamma,X,\pi,\mu)$, where $\pi \colon \Gamma \times X \to X$ is a free minimal action of $\Gamma$ on a Cantor space $X$ and $\mu$ is an ergodic $\Gamma$-invariant probability measure on $X$. A morphism 
	    between $(\Gamma,X,\pi,\mu)$ and $(\Lambda,Y,\sigma,\nu)$ is given by a pair $(\varphi,\alpha)$ where $\varphi\colon X \to Y$ is a homeomorphism and $\alpha \colon \Gamma \times X \to \Lambda$ is a continuous map (cocycle) such that $\varphi(\pi(g,x)) = \sigma(\alpha(g,x),\varphi(x))$, for all $g \in \Gamma, x \in X$, and $\varphi_*(\mu)=\nu$. 
	     The composition of morphisms $(\varphi,\alpha) \colon (\Gamma,X,\pi,\mu) \to (\Lambda,Y,\sigma,\nu)$ and $(\psi,\beta) \colon (\Lambda,Y,\sigma,\nu) \to (\Sigma,Z,\tau,\kappa)$ is given by $(\psi \circ \varphi, (g,x) \mapsto \beta(\alpha(g,x),\varphi(x)))$. Sometimes we suppress $\pi$ in the notation and denote an object in $m{\mathcal C}(R)$ by $(\Gamma,X,\mu)$. 
	     
	     The category ${\mathcal C}$ is similarly defined as $m\cC$ except that the objects are not endowed with an invariant measure and the condition that morphisms should be measure preserving is omitted. 
	     There is an obvious forgetful functor $q\colon m\cC\to \cC$. 
\end{definition}

\begin{definition}
Let $R$ be the orbit equivalence relation of a free minimal action $\pi$ of a group $\Gamma$ on a Cantor space $X$. We denote by $\cC(R)$ the full subcategory 
in $\cC$ of objects isomorphic to $(\Gamma, X,\pi)$ and call it the \emph{topological orbit equivalence category} of $R$. We denote by $m\cC(R)$ the 
full subcategory in $m\cC$ of objects that are mapped to $\cC(R)$ under the forgetful functor $q$. We call $m\cC(R)$ the \emph{measured topological orbit equivalence category} of $R$. 
\end{definition}

\begin{remark}
Let us assume that $R$ is the 
orbit equivalence relation of an action by an amenable group. Let $(\phi, \alpha)$ be a morphism in $\cC(R)$ from $(\Gamma, X, \pi)$ to $(\Lambda, Y,\sigma)$. Then both groups 
$\Gamma$ and $\Lambda$ are amenable. Choose a $\Gamma$-invariant probability Borel 
measure $\mu$ on $X$, and let $\nu:=\phi_\ast\mu$ be the push-forward measure on $Y$. Then $(\phi, \alpha)$ constitutes a morphism in $m\cC(R)$. Hence $q$ is surjective on objects and morphisms, i.e., essentially surjective and full.
\end{remark}

Let $\Gamma$ be a countable group and let $\Gamma$ act minimally and freely on a Cantor set $X$. We consider the group $\aut([[X,\Gamma]])$ of automorphisms of $[[X,\Gamma]]$.
Let $\eta \colon [[X,\Gamma]] \to [[X,\Gamma]]$ be an automorphism.
By Theorem~\ref{theomed} and Remark~\ref{rem: uniqueness}
there is a unique homeomorphism $\varphi_\eta \colon X \to X$ such that $\varphi_\eta(g(x))=\eta(g)(\varphi_\eta(x))$
for all $x \in X$ and $g\in [[X,\Gamma]]$.
We say that $\varphi_\eta$ \emph{realizes} $\eta$. We also obtain a
continuous cocycle $\alpha_\eta: \Gamma\times X\to \Gamma$ defined by
\[ \phi_\eta(gx)=\alpha(g,x)\phi_\eta(x). \]
It satisfies the \emph{cocycle identity}
\[
\alpha_\eta(g_1g_2,x)=\alpha_\eta(g_1,g_2x)\alpha_\eta(g_2,x).
\]

Now suppose there is a $\Gamma$-invariant
probability Borel measure $\mu$ on $X$. If $(\varphi_\eta)_\ast\mu=\mu$, then we say that $\eta\in\aut([[X,\Gamma]])$  is \emph{$\mu$-preserving}. The subset $\aut_\mu([[X,\Gamma]])\subset\aut([[X,\Gamma]])$ of $\mu$-preserving automorphisms is a subgroup. Note that our analysis shows that $\aut_\mu([[X,\Gamma]])$ is exactly the automorphism group of the object $(\Gamma,X,\mu)$ in the category $m{\mathcal C}(R)$. Similarly, the automorphism group of the object $(\Gamma,X)$ in ${\mathcal C}(R)$ is equal to ${\rm Aut}([[X,\Gamma]])$.

\begin{remark}
If $(X,\Gamma)$ is uniquely ergodic, then $\aut_\mu([[X,\Gamma]])=\aut([[X,\Gamma]])$: Let $\eta\in\aut([[X,\Gamma]])$, and let
$\varphi_\eta: X\to X$ realize $\eta$. Then it is clear that $(\varphi_\eta)_\ast\mu$ is a $\Gamma$-invariant Borel probability measure.
Hence $(\varphi_\eta)_\ast\mu=\mu$.
\end{remark}

\begin{definition}
	Let $\aut_X(\Gamma)$ be the subgroup of $\aut([[X,\Gamma]])$ of automorphisms $\theta:\Gamma\to\Gamma$ for which there is a (then unique)
	homeomorphism
	$\phi_\theta:X\to X$ with $\phi_\theta(gx)=\theta(g)\phi(x)$ for every $g\in\Gamma$ and $x\in X$. Further, let $\aut_{(X,\mu)}(\Gamma)$
	be the subgroup $\{\theta\in\aut_X(\Gamma)\mid (\phi_\eta)_\ast\mu=\mu\}$.
\end{definition}

We denote by ${\rm grVect}$ the category of $\mathbb N$-graded vector spaces and by ${\rm grRing}$ the category of $\mathbb N$-graded unital rings. For a probability space $(X,\mu)$ with a $\mu$-preserving $\Gamma$-action we denote the space of $\mu$-square integrable measurable functions by $L^2(X,\mu)$; it becomes a left $\Gamma$-module via $\gamma\cdot f(x)=f(\gamma^{-1}x)$ for $\gamma\in \Gamma$ and $f\in L^2(X,\mu)$. 

The rest of this section is devoted to the proof of the following
two theorems. 

\begin{theorem}\label{thm: full group representation in nilpotent case}
Let $R$ be the orbit equivalence relation of a free and minimal action of a group on a Cantor space $X$. There exists a functor
$H^* \colon m{\mathcal C}(R) \to {\rm grVect},$
that assigns to $(\Gamma,X,\pi,\mu)$ the graded vector space $H^*(\Gamma,L^2(X,\mu))$ of the group cohomology of $\Gamma$ with coefficients in the $\Gamma$-module $L^2(X,\mu)$.

Assume further that $R$ is the orbit equivalence relation of a free and minimal action of a finitely generated nilpotent group on a Cantor space and let $m{\mathcal C}(R)_{\rm nil}$ be the full 
subcategory of $m{\mathcal C}(R)$ whose objects $(\Gamma, X, \pi,\mu)$ have the property that $\Gamma$ is finitely generated and nilpotent\footnote{We do not know whether $m{\mathcal C}(R)_{\rm nil}=m{\mathcal C}(R)$}. 

Then there is a functor
$\Psi \colon m{\mathcal C}(R)_{\rm nil} \to {\rm grRing}$ to graded unital rings that assigns to $(\Gamma,X,\pi,\mu)$ in $m{\mathcal C}(R)_{\rm nil}$ the real cohomology algebra $H^*(\Gamma,\Rb)=\bigoplus_{i\ge 0} H^i(\Gamma,\Rb)$ with the following properties: 
\begin{enumerate}
\item The maps $H^\ast(\Gamma, L^2(X,\mu))\to H^\ast(\Gamma, \Rb)$ induced by integration in the coefficients yield a natural equivalence of functors from $H^\ast$, restricted to $m{\mathcal C}(R)_{\rm nil}$, to $\Psi$. 
\item By restricting $\Psi$ to the automorphism group of an object $(\Gamma,X, \pi,\mu)$ in $m{\mathcal C}(R)_{\rm nil}$ 
   we obtain a homomorphism
\[
	\aut_{\mu}([[X,\Gamma]])\to \aut(H^\ast(\Gamma,\Rb)),
\]
that extends the canonical homomorphism $\aut_{(X,\mu)}(\Gamma)\to \aut(H^\ast(\Gamma;\Rb))$.
\end{enumerate}
Here we denote by $\aut(H^\ast(\Gamma,\Rb))$ the group of graded unital algebra automorphisms of $H^\ast(\Gamma,\Rb)$.
\end{theorem}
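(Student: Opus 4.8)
The plan is to construct both functors at the level of inhomogeneous cochains, transporting cocycles along the orbit equivalence, and then to split off the real cohomology algebra as the ``mean'' part of the $L^2$-coefficients. Throughout, $H^*(\Gamma,L^2(X,\mu))$ denotes the continuous cohomology of $\Gamma$ with coefficients in the Koopman module $L^2(X,\mu)$, computed from bounded inhomogeneous cochains and taken with the reduced (Hausdorff) convention $\ker d_n/\overline{\operatorname{im} d_{n-1}}$ in each degree; this is the theory for which the assignment is functorial into ${\rm grVect}$ and, in the nilpotent case, lands in finite-dimensional spaces. Given a morphism $(\varphi,\alpha)\colon(\Gamma,X,\mu)\to(\Lambda,Y,\nu)$ --- which is automatically invertible, with inverse cocycle $\beta\colon\Lambda\times Y\to\Gamma$ --- I would define the transport $(\beta^\ast c)(\lambda_1,\dots,\lambda_n)(y)=c\bigl(\beta(\lambda_1,\lambda_2\cdots\lambda_n y),\dots,\beta(\lambda_n,y)\bigr)(\varphi^{-1}(y))$. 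Since each $\beta(\lambda,\_)$ is continuous on the compact set $Y$ it takes finitely many values, so on a suitable clopen partition the transport reduces to precomposition with the measure-preserving $\varphi^{-1}$; hence $\beta^\ast$ is a bounded chain map landing in $L^2(Y,\nu)$, the chain-map property coming from the cocycle identity. Setting $H^*(\varphi,\alpha):=\beta^\ast$ on reduced cohomology and matching the composition rule for cocycles with composition of pullbacks yields the functor $H^*\colon m{\mathcal C}(R)\to{\rm grVect}$.

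For claim (1), measure preservation makes the integration $I\colon L^2(X,\mu)\to\Rb$, $f\mapsto\int f\,d\mu$, and the inclusion of constants $\Rb\hookrightarrow L^2(X,\mu)$ into $\Gamma$-equivariant maps with $I\circ\mathrm{incl}=\id$, so that $L^2(X,\mu)=\Rb\oplus L^2_0(X,\mu)$ splits as $\Gamma$-modules and the induced integration map on cohomology is the projection onto the $H^*(\Gamma,\Rb)$-summand of $H^*(\Gamma,\Rb)\oplus H^*(\Gamma,L^2_0(X,\mu))$. Thus (1) reduces to the \emph{vanishing} $H^*(\Gamma,L^2_0(X,\mu))=0$, naturality of $I$ being immediate from $\varphi_\ast\mu=\nu$ (integration commutes with the transport). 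I would prove the vanishing by induction on the Hirsch length of $\Gamma$, in the stronger form $(\mathrm P)$: for a finitely generated nilpotent group $Q$ and any unitary representation $\mathcal H$ with $\mathcal H^Q=0$ one has $H^*(Q,\mathcal H)=0$. The cases $Q$ finite and $Q=\Zb$ are direct (for $Q=\Zb$ with generator acting by the unitary $U$, reducedness gives $\overline{\operatorname{im}(U-1)}=\ker(U-1)^\perp$, so $\bar H^0\cong\bar H^1\cong\ker(U-1)=\mathcal H^{\Zb}=0$). For the step, choose a central $\Zb=\langle z\rangle\le Q$ and run the Hochschild--Serre sequence of $1\to\Zb\to Q\to Q/\Zb\to1$; since $\operatorname{cd}\Zb=1$ only the rows $q=0,1$ survive, both isomorphic as $Q/\Zb$-modules to $\mathcal K:=\mathcal H^{\langle z\rangle}$, and $\mathcal K^{Q/\Zb}=\mathcal H^Q=0$, so the induction hypothesis kills both rows. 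In the abelian case $(\mathrm P)$ is exactly the statement that the Koszul complex of the commuting normal operators $U_i-1$ has no reduced cohomology, which holds because $\mathcal H^Q=0$ means the joint spectral measure on $\Tb^d$ has no atom at the trivial character (ergodicity, in the Koopman case).

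To upgrade to $\Psi\colon m{\mathcal C}(R)_{\rm nil}\to{\rm grRing}$, I assign $H^*(\Gamma,\Rb)$ with its cup product and let $\Psi(\varphi,\alpha)$ be the isomorphism corresponding to $H^*(\varphi,\alpha)$ under the equivalence of (1); the point is multiplicativity. Here I would pass to $L^\infty$-coefficients: the same transport formula with $\varphi^{-1,*}\colon L^\infty(X,\mu)\to L^\infty(Y,\nu)$, now a ring isomorphism, respects the cup product on the bar complex --- that product only involves the $\Gamma$-algebra multiplication and the module action, both preserved by the cocycle reindexing --- so it induces a graded-ring isomorphism $H^*(\Gamma,L^\infty(X,\mu))\cong H^*(\Lambda,L^\infty(Y,\nu))$. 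The inclusion of constants is a $\Gamma$-algebra map, so $\mathrm{incl}_*\colon H^*(\Gamma,\Rb)\to H^*(\Gamma,L^\infty(X,\mu))$ is an injective ring map exhibiting $H^*(\Gamma,\Rb)$ as a subring; because $\varphi$ carries constants to constants, the $L^\infty$-transport preserves this subring and restricts to $\Psi(\varphi,\alpha)$, which is therefore a graded-algebra isomorphism, compatible (along $\Rb\hookrightarrow L^\infty\hookrightarrow L^2$) with the map of (1). Part (2) is then formal: as $\aut_\mu([[X,\Gamma]])$ is the automorphism group of $(\Gamma,X,\mu)$ in $m{\mathcal C}(R)$, applying $\Psi$ gives a homomorphism $\aut_\mu([[X,\Gamma]])\to\aut(H^*(\Gamma,\Rb))$, and for $\theta\in\aut_{(X,\mu)}(\Gamma)$ the cocycle is the constant cocycle $g\mapsto\theta(g)$, so the transport collapses to the classical pullback $\theta^*$, the canonical homomorphism that $\Psi$ thereby extends.

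The main obstacle is the vanishing $H^*(\Gamma,L^2_0(X,\mu))=0$ in positive degrees. Since $\Gamma$ is amenable, the Koopman representation on $L^2_0$ has almost invariant vectors, so the coboundary operators fail to have closed range; one is forced into the reduced theory and must control it through the induction. Making the reduced Hochschild--Serre sequence rigorous for Hilbert coefficients --- where the closure operations interact delicately with the filtration --- is the key technical point, while the multiplicativity of $\Psi$ via the $L^\infty$-model is the main structural point.
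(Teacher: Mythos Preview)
Your construction of $H^\ast$ via cocycle transport and your identification of the key vanishing input $\bar H^\ast(\Gamma,L^2_0(X,\mu))=0$ match the paper's approach; the paper simply cites this vanishing from Shalom, whereas you sketch its proof by induction on Hirsch length via Hochschild--Serre, which is essentially Shalom's method, and you rightly flag the delicacy of the reduced spectral sequence with Hilbert coefficients. Part~(2) is indeed formal once $\Psi$ is in place, as you say.

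The genuine gap is in your multiplicativity argument for $\Psi$. You assert that the $L^\infty$-transport preserves the subring $\operatorname{im}\bigl(H^\ast(\Gamma,\Rb)\hookrightarrow H^\ast(\Gamma,L^\infty(X,\mu))\bigr)$ ``because $\varphi$ carries constants to constants''. But this fails already on cochains: if $c\colon\Gamma^n\to\Rb$ has constant coefficients, then
\[
(\beta^\ast c)(\lambda_1,\dots,\lambda_n)(y)=c\bigl(\beta(\lambda_1,\lambda_2\cdots\lambda_n y),\dots,\beta(\lambda_n,y)\bigr)
\]
depends on $y$ through the orbit-equivalence cocycle $\beta$, not through $\varphi$, and is typically a non-constant function of $y$. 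On cohomology your claim would follow if the inclusion $H^\ast(\Gamma,\Rb)\to H^\ast(\Gamma,L^\infty(X,\mu))$ were surjective, but your vanishing result is for $L^2_0$-coefficients, not $L^\infty_0$, and no Shalom-type theorem is available for $L^\infty$. The paper circumvents this: it first derives an explicit integral formula showing that $\Psi^k(\eta)$ factors as
\[
H^k(\Lambda,\Rb)\xrightarrow{\mathrm{incl}}H^k(\Lambda,L^\infty)\xrightarrow{H^k(\eta)}H^k(\Gamma,L^\infty)\xrightarrow{\int_X}H^k(\Gamma,\Rb),
\]
and then invokes the non-obvious fact---proved in Sauer's earlier work and specific to finitely generated nilpotent $\Gamma$---that the integration map $\int_X\colon H^\ast(\Gamma,L^\infty(X,\mu))\to H^\ast(\Gamma,\Rb)$ is multiplicative on cohomology, even though integration is not a ring homomorphism on $L^\infty$ itself. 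That multiplicativity of $\int_X$ is the missing ingredient in your argument.
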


The following theorem summarizes the situation in the special case $\Gamma=\Zb^d$. We denote by ${\rm mVect}$ the category whose objects are finite-dimensional vector spaces equipped with a translation invariant Lebesgue measure, and morphisms are given by measure preserving linear maps.

\begin{theorem}\label{thm: full group representation in abelian case}
Let $R$ be the orbit equivalence relation of a free and minimal action of $\Zb^d$ on a Cantor space $X$. There exists a functor $\Psi^1 \colon m{\mathcal C}(R) \to {\rm mVect}$ given on objects by
$\Psi^1(\Zb^d,X,\pi,\mu) := \Rb^d,$ where the volume form is induced by the standard lattice $\Zb^d \subset \Rb^d$. In particular, for every ergodic $\Zb^d$-invariant probability measure on $X$ there is a homomorphism
\[ \aut_{\mu}([[X,\Zb^d]])\to \{ A\in GL_d(\Rb)\mid \det(A)=\pm 1\}
\]
that extends the canonical homomorphism $\aut_{(X,\mu)}(\Zb^d)\to \aut(H^1(\Zb^d;\Rb))$.
\end{theorem}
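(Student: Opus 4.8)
The plan is to obtain $\Psi^1$ as the degree-one component of the graded-ring functor $\Psi$ furnished by Theorem~\ref{thm: full group representation in nilpotent case} in the special case $\Gamma=\Zb^d$. Here $H^\ast(\Zb^d,\Rb)=\bigwedge^\ast\Rb^d$ is the exterior algebra on $H^1(\Zb^d,\Rb)\cong\Rb^d$, and the lattice $H^1(\Zb^d,\Zb)\cong\Zb^d$ fixes the volume form, so on objects $\Psi^1(\Zb^d,X,\pi,\mu)=\Rb^d$ as required. On a morphism $(\varphi,\alpha)\colon(\Zb^d,X,\pi,\mu)\to(\Zb^d,Y,\sigma,\nu)$ I first record that the averaged cocycle
\[
\bar\alpha(g):=\int_X\alpha(g,x)\,d\mu(x)\in\Rb^d
\]
is a group homomorphism $\Zb^d\to\Rb^d$: this follows from the cocycle identity $\alpha(g_1g_2,x)=\alpha(g_1,g_2x)+\alpha(g_2,x)$ (written additively, as the target is abelian) together with the $\Gamma$-invariance of $\mu$. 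Extending $\Rb$-linearly gives a linear map $A:=\bar\alpha\colon\Rb^d\to\Rb^d$, and part~(1) of Theorem~\ref{thm: full group representation in nilpotent case} (integration in the coefficients) identifies $A$, under the canonical identifications $H^1(\Zb^d,\Rb)\cong\Rb^d$, with the map that $\Psi^1$ assigns to $(\varphi,\alpha)$.

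The genuinely new content, beyond Theorem~\ref{thm: full group representation in nilpotent case}, is that $\Psi^1$ takes values in ${\rm mVect}$, i.e.\ that every such $A$ preserves the volume form; equivalently $\det(A)=\pm1$. Since $\Psi$ is a functor to graded \emph{rings} and $H^\ast(\Zb^d,\Rb)$ is generated in degree one, the map induced on the top piece $H^d(\Zb^d,\Rb)\cong\bigwedge^d\Rb^d\cong\Rb$ is exactly $\bigwedge^d A=\det(A)$. The plan is to compute this top-degree map a second way, using that morphisms in $m\cC(R)$ preserve the measure. By part~(1) of Theorem~\ref{thm: full group representation in nilpotent case} we may replace $\Rb$-coefficients by $L^2(X,\mu)$; Poincar\'e duality for the $d$-dimensional duality group $\Zb^d$ then identifies $H^d(\Zb^d,L^2(X,\mu))$ with the coinvariants $L^2(X,\mu)_{\Zb^d}$, which by ergodicity of $\mu$ is one-dimensional, with canonical isomorphism $L^2(X,\mu)_{\Zb^d}\xrightarrow{\ \cong\ }\Rb$ given by $f\mapsto\int_X f\,d\mu$. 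The condition $\varphi_\ast\mu=\nu$ makes this integration isomorphism natural on $m\cC(R)$, so the induced map on coinvariants is the identity. Tracking this through Poincar\'e duality (which is natural only up to the orientation character, hence up to sign) shows that the top-degree map is $\pm1$, whence $\det(A)=\pm1$ and in particular $A\in GL_d(\Rb)$. The same conclusion is visible concretely from orbit counting: for $\mu$-a.e.\ $x$ the map $g\mapsto\alpha(g,x)$ is a bijection of $\Zb^d$ (both actions are free and share their orbits) whose asymptotic linear part is $A$, so a lattice-point count in large boxes forces $|\det(A)|=1$.

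I expect the main obstacle to be exactly this measure-preservation step: making the top-degree integration isomorphism natural and correctly bookkeeping the orientation sign (in the structural argument), or else justifying the requisite cocycle ergodic theorem and the uniformity of the box count (in the concrete argument). Once $\det(A)=\pm1$ is in hand, the remaining assertions are formal. Restricting $\Psi^1$ to the automorphism group of the object $(\Zb^d,X,\pi,\mu)$ — which, by the identification recorded before Theorem~\ref{thm: full group representation in nilpotent case}, is exactly $\aut_\mu([[X,\Zb^d]])$ — functoriality yields the desired homomorphism
\[
\aut_\mu([[X,\Zb^d]])\longrightarrow\{A\in GL_d(\Rb)\mid\det(A)=\pm1\}.
\]
Finally, for $\theta\in\aut_{(X,\mu)}(\Zb^d)$ the realizing homeomorphism satisfies $\phi_\theta(gx)=\theta(g)\phi_\theta(x)$, so the associated cocycle is the constant $\alpha(g,x)=\theta(g)$; hence $\bar\alpha=\theta$ and $A=\theta\in GL_d(\Zb)$, recovering the classical action $\theta\mapsto\theta^\ast$ on $H^1(\Zb^d,\Rb)$. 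Thus the homomorphism above restricts on $\aut_{(X,\mu)}(\Zb^d)$ to the canonical homomorphism $\aut_{(X,\mu)}(\Zb^d)\to\aut(H^1(\Zb^d;\Rb))$, i.e.\ it extends it, as claimed.
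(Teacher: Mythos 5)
The formal scaffolding of your proposal matches the paper: $\Psi^1$ is obtained by restricting the functor of Theorem~\ref{thm: full group representation in nilpotent case} to degree one, the explicit description via the averaged cocycle agrees with formula~\eqref{eq: explicit formula}, the reduction of the volume-preservation claim to the top-degree map $\bigwedge^d A=\det(A)$ via multiplicativity and $H^\ast(\Zb^d,\Rb)\cong\Lambda^\ast_\Rb H^1(\Zb^d,\Rb)$ is exactly the paper's reduction, and the extension statement for $\aut_{(X,\mu)}(\Zb^d)$ (constant cocycle) is handled as in the paper. But the crux, $\det(A)=\pm 1$, is not established by either of your routes. In the structural route you assert that the Poincar\'e duality identification $H^d(\Zb^d,L^2(X,\mu,\Rb))\cong H_0(\Zb^d,L^2(X,\mu,\Rb))$ can be ``tracked'' through the morphism up to sign. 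The maps $H^\ast(\eta)$ and $H_\ast(\eta)$ are \emph{not} induced by homomorphisms of $\Zb^d$-modules --- they twist by the cocycle $\alpha_\eta$ --- so there is no automatic naturality of cap product with the fundamental class; commutativity of the duality square under $\eta$, even up to sign, is precisely the nontrivial assertion, and the ``orientation character'' cannot be the source of the sign since $\Zb^d$ is orientable. (A secondary problem: the algebraic coinvariants $L^2(X,\mu,\Rb)_{\Zb^d}$ need not equal $\Rb$ --- the span of $\{f-g\cdot f\}$ is dense in, but in general not equal to, the mean-zero subspace --- so your ``canonical isomorphism by integration'' is only a surjection. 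What measure preservation rigorously gives is naturality of integration in degree zero, and nothing transports that to degree $d$ without the missing naturality.)

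The paper's proof supplies exactly the ingredient your sketch lacks, and it never uses naturality of duality: Poincar\'e duality and ergodicity serve only to compute the groups $H_d(\Zb^d,L^2(X,\mu,\Zb))\cong\Zb$ and $H_d(\Zb^d,L^2(X,\mu,\Rb))\cong\Rb$, so that the images $y_d,z_d$ of the homological fundamental class are generators. The work is then done by Lemma~\ref{lem: induced map in homology}: $H_d(\eta)$ exists, restricts to the integral coefficients $L^2(X,\mu,\Zb)$, and is adjoint to $H^d(\eta)$ under the pairing~\eqref{eq: pairing}. Writing $H^d(\eta)(c_d)=r\cdot c_d$ for the image $c_d$ of the cohomological fundamental class and $H_d(\eta)(y_d)=m\cdot y_d$ with $m\in\Zb$, adjointness gives
\[
r=\langle H^d(\eta)(c_d),z_d\rangle=\langle c_d,H_d(\eta)(z_d)\rangle=m\in\Zb,
\]
and applying the same argument to the inverse morphism $\eta^{-1}$ gives $1/r\in\Zb$, forcing $r=\pm 1$. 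This two-sided integrality argument --- integrality via $\Zb$-valued $L^2$-coefficients, combined with invertibility of the morphism --- is the idea absent from your proposal; without it you cannot exclude, say, $r=2$. Your concrete fallback is also incomplete: Birkhoff's theorem gives $\alpha_\eta(g^n,x)/n\to A(g)$ only almost everywhere and direction by direction, whereas the lattice-point count requires an error $o(N)$ uniformly over boxes of side $N$; such uniform control of $\alpha_\eta(\cdot,x)-A$ is not available in general (it would follow from unique ergodicity, but not from ergodicity alone), and indeed the paper raises uniform closeness of the cocycle to $A$ as an open question in its final section on further directions.
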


We also prove the following realization theorem:

\begin{theorem} \label{thm: realization}
Let $d \in \Nb$.
For every matrix $A \in  GL_d(\Rb)$ with $\det(A)=\pm 1$, there exists an equivalence relation $R$ induced by a free $\Zb^d$-action as above and a morphism $\eta=(\varphi_{\eta},\alpha_\eta)$ in the category $m{\mathcal C}(R)$ such that the induced linear map $\Psi^1(\eta)$ is given by left-multiplication with the matrix $A$.
\end{theorem}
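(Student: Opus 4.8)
The plan is to realize $A$ not as an automorphism of a single object but as a \emph{morphism} in $m\cC(R)$ between two transversal systems of a common suspension, exploiting that $\Psi^1$ is computed by averaging the orbit cocycle. First I would record this averaging formula. For a morphism $\eta=(\varphi,\alpha)$ between objects $(\Zb^d,X,\pi,\mu)$ and $(\Zb^d,X',\pi',\mu')$ in $m\cC(R)$, the cocycle identity together with $\varphi_\ast\mu=\mu'$ and invariance of $\mu$ shows that $g\mapsto \int_X \alpha(g,x)\,d\mu(x)$ is a homomorphism $\Zb^d\to\Rb^d$; under the identification $H^1(\Zb^d,\Rb)\cong\Rb^d$ this is exactly $\Psi^1(\eta)$ of Theorem~\ref{thm: full group representation in abelian case}, so the $i$-th column of the realizing matrix is $\int_X \alpha(e_i,x)\,d\mu(x)$. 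Checking that this coincides with $\Psi^1$ is a routine unwinding of the integration-in-coefficients map, and I would do it first; the task then becomes to produce $(\varphi,\alpha)$ whose averages are the columns of $A$.

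For the construction I would start from any free minimal uniquely ergodic $\Zb^d$ Cantor system $(X_0,\Zb^d,\pi_0,\mu_0)$, e.g.\ a $\Zb^d$-odometer, and form its suspension $\Rb^d$-flow $\rho$ on the compact space $M=(X_0\times\Rb^d)/\Zb^d$, equipped with the transverse invariant measure induced by $\mu_0$; here $X_0$ is a complete Cantor transversal and $\rho$ is minimal. Given $A$ with $\det(A)=\pm1$, define the reparametrized flow $\rho^A(v):=\rho(Av)$. Since $A$ is a linear automorphism of $\Rb^d$, the flow $\rho^A$ is again free and minimal with \emph{exactly the same orbits} as $\rho$. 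I would then view $\rho^A$ as the suspension of a free minimal Cantor $\Zb^d$-system $(X_1,\Zb^d,\pi_1,\mu_1)$ on the same space $M$. Because $\rho$ and $\rho^A$ share their orbits, the holonomy equivalence relations of the two transversals are isomorphic — this is the common relation $R$ — so both $(\Zb^d,X_0,\mu_0)$ and $(\Zb^d,X_1,\mu_1)$ are objects of $m\cC(R)$. The identity of $M$, transported to the transversals, yields a morphism $\eta=(\varphi,\alpha)$ whose cocycle records precisely the lattice change $\Zb^d\mapsto A\Zb^d$, and by the averaging formula $\Psi^1(\eta)$ is left multiplication by $A$ (up to the inverse or transpose forced by conventions, which is harmless since $\{\det=\pm1\}$ is closed under both). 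The hypothesis $\det(A)=\pm1$ enters exactly here: it is the statement that $A$ preserves lattice covolume, hence the transverse invariant measure, so $\varphi_\ast\mu_0=\mu_1$ and $\eta$ is a morphism in the measured category $m\cC(R)$ and not merely in $\cC(R)$.

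The main obstacle is the transversal bookkeeping rather than the linear algebra: one must verify that $\rho^A$ really is (conjugate to) the suspension of a free minimal Cantor $\Zb^d$-system and that its transversal carries an orbit relation isomorphic to that of $X_0$, so that both systems live in a single category $m\cC(R)$. This is where I expect to invoke standard facts about complete transversals of minimal $\Rb^d$-laminations, whose étale holonomy groupoids are Morita equivalent, together with a check that freeness and the Cantor structure survive reparametrization; for non-generic $A$ one may first perturb the transversal to restore freeness. An alternative, more hands-on route avoids suspensions altogether: using multiplicativity of $\Psi^1$ under composition, one reduces to the generators of $\{\det=\pm1\}$, namely the elementary shears $E_{ij}(t)$ and a single coordinate reflection. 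The reflection is realized by a $GL_d(\Zb)$-symmetry of a symmetric odometer, while $E_{ij}(t)$ is realized by cutting the $e_j$-orbit along a clopen set of measure $t$ — the source of the irrational entry, since clopen sets in a Cantor minimal system may have irrational measure — and re-gluing with an $e_i$-shift. In this route the crux is verifying that the re-glued map is a genuine measure-preserving self-orbit-equivalence with continuous cocycle whose averaged displacement equals $t\,e_i+e_j$; equivalently, that this displacement cocycle is not a coboundary, which is again exactly what the suspension interpretation of the construction guarantees.
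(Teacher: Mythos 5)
Your opening paragraph is correct: the averaging formula you state is exactly the paper's explicit formula \eqref{eq: explicit formula} in degree one, and identifying $\Psi^1(\eta)$ with the matrix of average displacements (up to a transpose, which is indeed harmless) is fine. The main construction, however, has a fatal gap. Write $M=(X_0\times\Rb^d)/\Zb^d$ with $n\cdot(x,v)=(\pi_0(n,x),v-n)$ and $\rho(w)[x,v]=[x,v+w]$. The return times of the canonical transversal $X_0\times\{0\}$ under $\rho^A(w)=\rho(Aw)$ form the lattice $A^{-1}\Zb^d$, and the return map along $A^{-1}n$ is $[x,0]\mapsto[\pi_0(n,x),0]$. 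Hence the Cantor $\Zb^d$-system you extract from $\rho^A$ is $(X_0,\pi_0)$ itself: any identification of $\Zb^d$ with the return lattice differs from $n\mapsto A^{-1}n$ by an element of $GL_d(\Zb)$, the ``identity of $M$ transported to the transversals'' is the identity of $X_0$, and its cocycle $\alpha(g,x)$ is constant in $x$, namely a group homomorphism $\Zb^d\to\Zb^d$. So your morphism has $\Psi^1(\eta)\in GL_d(\Zb)$; reparametrizing the flow is invisible to the transversal dynamics and can never produce irrational entries. This is the heart of the theorem, not bookkeeping: the cocycle is continuous and integer-valued, so to realize an irrational $A$ it must genuinely depend on $x$, the irrationality entering through measures of clopen sets, and a time change of the suspension --- which changes neither the transversal systems nor the orbit relation on the transversal --- cannot create such dependence. (The alternative reading, finding a different transversal of $\rho$ whose return lattice is $A\Zb^d$, does not rescue the argument: such transversals need not exist for irrational $A$.)

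Your fallback route is essentially the paper's proof, and the two points you leave open are exactly where the content lies. The paper also reduces to elementary shears and one reflection, but it realizes these generators as bi-Lipschitz bijections of $\Zb^d$ at bounded distance from the linear maps, e.g.\ $(x,y)\mapsto(x,y+\lfloor\lambda x\rfloor)$, composes them \emph{inside} $\Zb^d$ (where composition is trivial) to obtain a single bi-Lipschitz map $f_A$ at bounded distance from $A$, and only then runs Gromov's construction of Theorem~\ref{main1} once; the resulting morphism satisfies $|\alpha_\eta(g,x)-Ag|<C$ uniformly in $g$ and $x$, and since two homomorphisms $\Zb^d\to\Rb$ at bounded distance are equal, the averaged cocycle is exactly $A$ (up to transpose). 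By contrast, you propose to compose realizations of the generators as morphisms in $m\cC(R)$, which requires all of them to live on one common relation with matching source and target objects --- this is not addressed and is not automatic, since each shear realization naturally lives on a system adapted to its parameter $t$. Moreover, your cut-and-reglue shear does not, as stated, define a $\Zb^d$-action: the modified generator commutes with the untouched ones only if the cut clopen set is invariant under them, which minimality obstructs unless one first arranges a suitable product structure, and the set must in addition have the prescribed irrational measure, forcing the system to be chosen after $t$ (e.g.\ of Sturmian type). These issues are fixable, but fixing them essentially reproduces the paper's argument.
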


\begin{example}
Let $\Zb$ act on the $p$-adic integers $\Zb_p$ by adding~$1$. The product system of $\Gamma=\Zb\times\Zb$ acting on $X= \Zb_p\times \Zb_p$ is a
minimal, uniquely ergodic Cantor system -- unique ergodicity follows from the uniqueness of the Haar measure. In this case, we obtain $\aut(\Zb^2)=\gl_2(\Zb)=\aut_X(\Zb^2)$.
The orbit preserving homeomorphism $\phi_A\colon X\to X$ associated to $A\in\gl_2(\Zb)$ is just matrix multiplication with~$A$.
This shows that that the image of ${\rm Aut}_{\mu}([[X,\Gamma]])$ under the homomorphism of Theorem~\ref{thm: full group representation in abelian case}
contains all matrices with integer entries in this case.
\end{example}

In general, it is still rather mysterious what subgroups of  $\{A\in GL_d(\Rb)\mid \det(A)=\pm 1\}$ can appear as the image of ${\rm Aut}_{\mu}([[X,\Gamma]])$ -- being an intrinsic invariant of the topological orbit equivalence relation together with some ergodic invariant measure. The countable subgroup arising as the image of $H^1(\Zb^d,C(X,\Zb))$ in $\Rb^d$ (given by integration with respect to $\mu$) is always preserved by the action of ${\rm Aut}_{\mu}([[X,\Gamma]])$.

\section{Maps in cohomology induced by morphisms in $m{\mathcal C}(R)$ and ${\mathcal C}(R)$}\label{sec: functors}

Let $\eta =(\varphi_{\eta},\alpha_{\eta})$ be a morphism in the category $m{\mathcal C}(R)$ between $(\Gamma,X,\pi,\mu)$ and $(\Lambda,Y,\sigma,\nu)$. It is clear that $\eta$ is invertible with inverse $\eta^{-1} =(\varphi_{\eta^{-1}},\alpha_{\eta^{-1}})$.
One easily verifies that
\begin{align}\label{eq: inverse cocycle}
\phi_{\eta^{-1}} &=(\phi_\eta)^{-1}\notag\\
g&=\alpha_{\eta^{-1}}(\alpha_\eta(g,x),\phi_\eta(x))
\end{align}
Further, we have
\begin{equation}\label{eq: inverse equivariance cocycle}
	\varphi_\eta(\pi(g^{-1},x))=\sigma(\alpha_\eta(g, g^{-1}x)^{-1}, \varphi_\eta(x))
\end{equation}
since $\varphi_\eta(x)=\varphi_\eta(\pi(gg^{-1},x))=\sigma(\alpha_\eta(g, g^{-1}x),\varphi_\eta(g^{-1}x))$. From now on we will omit to mention the actions $\pi$ and $\sigma$, unless there is any risk of confusion.
From~\eqref{eq: inverse cocycle} and~\eqref{eq: inverse equivariance cocycle} we conclude the identity
\begin{equation}\label{eq: coycle id eta and eta inverse}
	\alpha_{\eta^{-1}}(\alpha_\eta(g,g^{-1}x),\alpha_\eta(g,g^{-1}x)^{-1}\phi_\eta(x))=g.
\end{equation}
We equip $\Gamma^{k+1}\times X$ with the product $\bar \mu$ of the counting measure on $\Gamma^{k+1}$ and $\mu$.
By~\eqref{eq: coycle id eta and eta inverse} the continuous map
	\begin{gather*}
		\Omega^k_\eta\colon \Gamma^{k+1}\times X\to  \Lambda^{k+1}\times Y\\
	\bigl(g_0,\ldots,g_k,x\bigr)\mapsto\bigl(\alpha_\eta(g_0,g_0^{-1}x),\ldots, \alpha_\eta(g_k,g_k^{-1}x), \phi_\eta(x)\bigr)
	\end{gather*}
   is a homeomorphism with inverse given by 	
  \[
  	  \bigl(h_0,\ldots,h_k,y\bigr)\mapsto\bigl(\alpha_{\eta^{-1}}(h_0,h_0^{-1}x),\ldots, \alpha_{\eta^{-1}}(h_k,h_k^{-1}y), \phi_{\eta^{-1}}(y)\bigr)
  \]	
  Since the cocycles are continuous, there is a clopen partition $P$ of $X$ such that for each $A\in P$ the restriction of
  $\Omega^k_\eta$ to $\{(g_0,\ldots, g_k)\}\times A$ composed with the projection to $\Lambda^{k+1}$ is constant. Then $\nu(\Omega^k_\eta(\{(g_0,\ldots, g_k)\}\times A))=\mu(\phi_\eta(A))=\mu(A)=\nu(\{(g_0,\ldots, g_k)\}\times A)$.
  So $\Omega^k_\eta$ is locally measure-preserving and hence measure-preserving, i.e., $\Omega_{\eta*}^k(\bar \mu)= \bar \nu$.

In the sequel, we will consider the reduced group cohomology of a group $\Gamma$ in a Banach $\Gamma$-module. Whereas the ordinary group cohomology in degree $k$ can be defined as 
the quotient of the kernel of the $k$-th differential by the image of the preceding differential in the (homogeneous) bar complex, the \emph{reduced cohomology} is the quotient of the kernel of the $k$-th differential by the closure of the image of the preceding differential with respect to the topology of pointwise convergence on the bar complex. 

Similar maps as in the following lemma were previously studied
in~\cites{shalom, sauer}. 

\begin{lemma}\label{lem: induced map in cohomology}
Let $\eta =(\varphi_{\eta},\alpha_{\eta})$ be a morphism in the category $m{\mathcal C}(R)$ between $(\Gamma,X,\pi,\mu)$ and $(\Lambda,Y,\sigma,\nu)$. Then the maps
\[C^k(\eta)\colon C^k(\Lambda, L^\infty(Y,\nu,\Rb))\to C^k(\Gamma, L^\infty(X,\mu,\Rb)),~C^k(\eta)(f)=f\circ\Omega^k_\eta\]
for a homogenous cochain $f: \Gamma^{k+1}\to  L^\infty(X,\mu,\Rb)$ form a $\Gamma$-equivariant
chain homomorphism and induce contravariant functorial homomorphisms in cohomology
\[
	H^k(\eta)\colon H^k(\Lambda, L^\infty(Y,\nu,\Rb))\to H^k(\Gamma, L^\infty(X,\mu,\Rb)).
\]
Moreover, there are analogous induced contravariant functorial homomorphisms for ordinary and reduced cohomology with coefficients in $L^2(X,\mu,\Zb)$ and $L^2(X,\mu,\Rb)$. Similarly, analogous functors exist on the category ${\mathcal C}(R)$, when we consider cohomology with coefficients in $C(X,\Zb)$ or $C(X,\Rb)$.
\end{lemma}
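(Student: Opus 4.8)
The plan is to reduce everything to the fact, established just before the statement, that $\Omega^k_\eta$ is a measure-preserving homeomorphism, together with the observation that it intertwines the relevant diagonal group actions up to the cocycle $\alpha_\eta$. First I would fix the bookkeeping. A homogeneous $k$-cochain $f\in C^k(\Lambda,L^\infty(Y,\nu,\Rb))$, i.e.\ a $\Lambda$-equivariant map $\Lambda^{k+1}\to L^\infty(Y,\nu,\Rb)$, is the same datum as an essentially bounded measurable function $\tilde f\colon\Lambda^{k+1}\times Y\to\Rb$ that is invariant under the diagonal action $\lambda\cdot(h_0,\ldots,h_k,y)=(\lambda h_0,\ldots,\lambda h_k,\sigma(\lambda,y))$, and likewise on the $\Gamma$-side; under this identification $C^k(\eta)(f)=f\circ\Omega^k_\eta$ is literally the pullback $\tilde f\circ\Omega^k_\eta$.

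The heart of the matter is that this pullback remains diagonally invariant. Using $(gg_i)^{-1}gx=g_i^{-1}x$, the cocycle identity, and the relation $\varphi_\eta(gx)=\sigma(\alpha_\eta(g,x),\varphi_\eta(x))$, a direct computation yields
\[
\Omega^k_\eta\bigl(gg_0,\ldots,gg_k,gx\bigr)=\lambda\cdot\Omega^k_\eta\bigl(g_0,\ldots,g_k,x\bigr),\qquad\lambda:=\alpha_\eta(g,x),
\]
so that $\Gamma$-diagonal invariance of $\tilde f\circ\Omega^k_\eta$ is inherited from $\Lambda$-diagonal invariance of $\tilde f$. This is precisely the assertion that $C^k(\eta)$ lands in the equivariant cochains, i.e.\ that we obtain a $\Gamma$-equivariant map of cochain complexes. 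That $C^\bullet(\eta)$ commutes with the bar differentials is then immediate: $\Omega^k_\eta$ operates coordinatewise on the group variables and identically (via $\varphi_\eta$) on the space variable, hence commutes with the face maps defining the differential.

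Next I would treat well-definedness on the various coefficient modules. Since $\Omega^k_\eta$ is measure-preserving, pullback along it preserves the $L^\infty$- and $L^2$-norms and is insensitive to modifications on null sets, so $\tilde f\circ\Omega^k_\eta$ represents a genuine element of $C^k(\Gamma,L^\infty(X,\mu,\Rb))$, and similarly with $L^2(X,\mu,\Rb)$ or $L^2(X,\mu,\Zb)$ coefficients. Boundedness of the pullback also shows it is continuous for the topology of pointwise convergence on the bar complex, hence carries the closure of the coboundaries into itself and so induces a map on reduced cohomology as well. For the category $\cC(R)$ the argument is identical but simpler: as $\Omega^k_\eta$ is a homeomorphism, pullback sends $C(Y,\Rb)$- (resp.\ $C(Y,\Zb)$-) valued cochains to $C(X,\Rb)$- (resp.\ $C(X,\Zb)$-) valued ones, no measure being needed.

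Finally, contravariant functoriality reduces to the identity $\Omega^k_{\zeta\circ\eta}=\Omega^k_\zeta\circ\Omega^k_\eta$ for composable morphisms, which I would check coordinatewise from the composition law for cocycles in $m\cC$, using the equivariance relation~\eqref{eq: inverse equivariance cocycle} to rewrite $\alpha_\eta(g_i,g_i^{-1}x)^{-1}\varphi_\eta(x)=\varphi_\eta(g_i^{-1}x)$; pullback then reverses the order, giving $H^k(\zeta\circ\eta)=H^k(\eta)\circ H^k(\zeta)$. I expect the only genuinely delicate point to be the measure-theoretic one, namely ensuring that the pointwise pullback descends to a well-defined operation on the $L^\infty$ and $L^2$ classes and respects the passage to reduced cohomology; but this is exactly what measure-preservation of $\Omega^k_\eta$ provides, so the remaining verifications are formal.
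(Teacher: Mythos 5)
Your overall route coincides with the paper's: pull back cochains along $\Omega^k_\eta$, verify invariance via the twisted equivariance $\Omega^k_\eta(gg_0,\ldots,gg_k,gx)=\alpha_\eta(g,x)\cdot\Omega^k_\eta(g_0,\ldots,g_k,x)$ (this is exactly the paper's displayed computation, using the cocycle identity in the form $\alpha_\eta(gg_i,g_i^{-1}x)=\alpha_\eta(g,x)\alpha_\eta(g_i,g_i^{-1}x)$), note that the differential commutes with the pullback because $\Omega^k_\eta$ acts coordinatewise, and reduce functoriality to $\Omega^k_{\zeta\circ\eta}=\Omega^k_\zeta\circ\Omega^k_\eta$, which is precisely the content of the paper's Lemma~\ref{lem: functoriality}, proved there with the same appeal to~\eqref{eq: inverse equivariance cocycle}. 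All of these steps in your write-up are correct (and your composition order $H^k(\zeta\circ\eta)=H^k(\eta)\circ H^k(\zeta)$ is the right contravariant one).

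There is, however, a genuine gap in the well-definedness step, which you flag as the delicate point but then resolve by appeal to measure preservation alone. The trouble starts with your identification of a homogeneous cochain with ``an essentially bounded measurable function $\tilde f\colon\Lambda^{k+1}\times Y\to\Rb$'': a cochain $f\colon\Lambda^{k+1}\to L^\infty(Y,\nu,\Rb)$ is only essentially bounded \emph{slice-wise}; the norms $\|f(h_0,\ldots,h_k)\|_\infty$ are in general unbounded as the tuple varies, since diagonal invariance only relates tuples in the same diagonal orbit and there are infinitely many orbits. So $\tilde f$ need not lie in $L^\infty$ (or $L^2$) of the product space, and ``pullback along a measure-preserving map preserves the norm'' has nothing to apply to. What must actually be shown is that for each fixed $(g_0,\ldots,g_k)\in\Gamma^{k+1}$ the function $x\mapsto f\bigl(\alpha_\eta(g_0,g_0^{-1}x),\ldots,\alpha_\eta(g_k,g_k^{-1}x)\bigr)\bigl(\phi_\eta(x)\bigr)$ is again essentially bounded (resp.\ square-integrable). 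This is exactly what the paper's opening observation provides: since the cocycle is continuous and $X$ is compact, the image $\Omega^k_\eta(\{(g_0,\ldots,g_k)\}\times X)$ is contained in $F\times Y$ for a \emph{finite} $F\subset\Lambda^{k+1}$, so the pullback on this slice is a finite clopen patchwork of the functions $f(h)\circ\phi_\eta$, $h\in F$, and is controlled by $\max_{h\in F}\|f(h)\|$. Without this local finiteness the pullback of a cochain could fail to be essentially bounded or square-integrable, so the step would fail as written. Measure preservation of $\Omega^k_\eta$ is then used for what you do state correctly: insensitivity to null sets (and the slice-wise $L^2$-norm control). Once this finiteness observation is inserted, the rest of your argument, including the passage to reduced cohomology and the continuous-coefficient case over ${\mathcal C}(R)$, goes through.
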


\begin{proof}[Proof of Lemma~\ref{lem: induced map in cohomology}]
Since $\Omega_\eta^k$ is a homeomorphism, for every tuple $(g_0,\ldots, g_k)\in\Gamma^{k+1}$ there is a finite subset
$F\subset \Lambda^{k+1}$ with $\Omega_\eta^k(\{(g_0,\ldots, g_k)\}\times X)\subset F\times Y$. Together with the fact that
$\Omega_\eta^k$ is measure-preserving it follows that $C^k(\eta)$ is well defined.
It is clear from the definition that $C^k(\eta)$ is compatible with the differential of the homogeneous bar complex.
Next we show that $C^k(\eta)$ restricts to the subcomplexes of $\Gamma$-invariant cochains: The cocycle property of $\alpha_\eta$ implies that
\begin{equation}\label{eq: cocycle property convoluted}
	\alpha_\eta(gg_0, g_0^{-1}x)=\alpha_\eta(g,g_0g_0^{-1}x)\alpha_\eta(g_0,g_0^{-1}x)=\alpha_\eta(g,x)\alpha_\eta(g_0,g_0^{-1}x).
\end{equation}
We have to show that
\[
    C^k(\eta)(f)(gg_0,\ldots,gg_k)(gx)=C^k(\eta)(f)(g_0,\ldots,g_k)(x)
\]
provided $f$ is a $\Gamma$-invariant cochain.
This follows from the following computation:
\begin{align*}
	C^k(\eta)(f)(gg_0,\ldots,gg_k)(gx)
	&= f(\alpha_\eta(gg_0, g_0^{-1}g^{-1}gx),\ldots, \alpha_\eta(gg_k, g_k^{-1}g^{-1}gx))(\phi_\eta(gx))\\
	&= f(\alpha_\eta(gg_0,g_0^{-1}x),\ldots, \alpha_\eta(gg_k,g_k^{-1}x))(\alpha_\eta(g,x)\phi_\eta(x))\\
	&\overset{\eqref{eq: cocycle property convoluted}}{=} f(\alpha_\eta(g,x)\alpha_\eta(g_0,g_0^{-1}x),\ldots, \alpha_\eta(g,x)\alpha_\eta(g_k,g_k^{-1}x))(\alpha_\eta(g,x)\phi_\eta(x))\\
	&= f(\alpha_\eta(g_0,g_0^{-1}x),\ldots, \alpha_\eta(g_k,g_k^{-1}x))(\phi_\eta(x))\\
	&= C^k(\eta)(f)(g_0,\ldots,g_k)(x)\qedhere
\end{align*}
It remains to prove functoriality; this is done in the following lemma.
\end{proof}

\begin{lemma}\label{lem: functoriality}
The assignment $\eta \mapsto H^k(\eta)$ is functorial.
\end{lemma}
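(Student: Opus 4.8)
The plan is to establish functoriality already at the level of the cochain maps $C^k$ and then to observe that it descends automatically to (ordinary or reduced) cohomology, since induced maps in cohomology compose and preserve identities. Because $C^k(\eta)(f)=f\circ\Omega^k_\eta$ by definition, the entire statement reduces to two identities about the homeomorphisms $\Omega^k_\eta$: that $\Omega^k_{\mathrm{id}}$ is the identity of $\Gamma^{k+1}\times X$, and that for composable morphisms $\eta_1=(\varphi_1,\alpha_1)\colon(\Gamma,X)\to(\Lambda,Y)$ and $\eta_2=(\varphi_2,\alpha_2)\colon(\Lambda,Y)\to(\Sigma,Z)$ one has the cocycle-type identity
\[
	\Omega^k_{\eta_2\circ\eta_1}=\Omega^k_{\eta_2}\circ\Omega^k_{\eta_1}.
\]
Granting this, $C^k(\eta_2\circ\eta_1)(f)=f\circ\Omega^k_{\eta_2}\circ\Omega^k_{\eta_1}=C^k(\eta_1)\bigl(C^k(\eta_2)(f)\bigr)$, so $C^k$ is a contravariant functor on the cochain complexes; passing to cohomology then yields $H^k(\eta_2\circ\eta_1)=H^k(\eta_1)\circ H^k(\eta_2)$ together with $H^k(\mathrm{id})=\mathrm{id}$.

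The identity-morphism case is immediate from the defining formula for $\Omega^k_\eta$. For the composition identity I would unwind both sides on a point $(g_0,\ldots,g_k,x)$. Recall that the composite morphism is $\eta_2\circ\eta_1=(\varphi_2\circ\varphi_1,\alpha)$ with $\alpha(g,x)=\alpha_2(\alpha_1(g,x),\varphi_1(x))$, so the last coordinates of both sides agree trivially, both being $\varphi_2(\varphi_1(x))$. For the $i$-th coordinate, applying $\Omega^k_{\eta_1}$ first and setting $h_i=\alpha_1(g_i,g_i^{-1}x)$ and $y=\varphi_1(x)$, the right-hand side produces $\alpha_2(h_i,h_i^{-1}y)$, whereas the left-hand side produces $\alpha(g_i,g_i^{-1}x)=\alpha_2\bigl(\alpha_1(g_i,g_i^{-1}x),\varphi_1(g_i^{-1}x)\bigr)=\alpha_2\bigl(h_i,\varphi_1(g_i^{-1}x)\bigr)$. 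These agree precisely when $h_i^{-1}y=\varphi_1(g_i^{-1}x)$, that is, when
\[
	\alpha_1(g_i,g_i^{-1}x)^{-1}\varphi_1(x)=\varphi_1(g_i^{-1}x).
\]

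This last equality is exactly the inverse equivariance relation~\eqref{eq: inverse equivariance cocycle} applied to $\eta_1$ with the group element $g_i$, so the composition identity holds coordinatewise and the lemma follows. The one place requiring care is the bookkeeping of base points inside the cocycle arguments: $\Omega^k_\eta$ evaluates $\alpha_\eta$ at the shifted point $(g_i,g_i^{-1}x)$ rather than at $(g_i,x)$, and it is precisely relation~\eqref{eq: inverse equivariance cocycle}---rather than the plain equivariance $\varphi_1(gx)=\alpha_1(g,x)\varphi_1(x)$---that identifies the shifted base point $h_i^{-1}y$ on the right-hand side with $\varphi_1(g_i^{-1}x)$ on the left. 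I expect this matching of base points to be the only genuine subtlety; everything else is formal.
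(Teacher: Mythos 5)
Your proposal is correct and is essentially the paper's own proof: the paper verifies the same composition identity by evaluating $C^k(\eta\circ\theta)(f)$ on a cochain and invoking~\eqref{eq: inverse equivariance cocycle} at exactly the base-point-matching step you single out, whereas you phrase it as the cleaner statement $\Omega^k_{\eta_2\circ\eta_1}=\Omega^k_{\eta_2}\circ\Omega^k_{\eta_1}$ and let contravariance of precomposition do the rest. This is a repackaging of the identical computation, not a different argument.
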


\begin{proof}
It is clear that $H^k(\id)=\id$.
Let $\eta = (\varphi_{\eta},\alpha_{\eta})$ and $\theta = (\varphi_{\theta},\alpha_{\theta})$ be morphisms in the category ${\mathcal C}(R)$. Then
\begin{align*}
	\phi_{\eta\circ\theta}&=\phi_{\eta}\circ\phi_{\theta},\text{ and}\\
	\alpha_{\eta\circ\theta}(g,x)&=\alpha_\eta(\alpha_\theta(g,x),\theta(x)).
\end{align*}
The claim follows from the following computation:
\begin{align*}
	C^k(\eta\circ\theta)(f)(g_0, g_1,\ldots)(x)
	&=f(\alpha_{\eta\circ\theta}(g_0,g_0^{-1}x),\ldots)(\eta(\theta(x)))\\
	&=f(\alpha_\eta(\alpha_\theta(g_0,g_0^{-1}x), \theta(g_0^{-1}x)),\ldots)(\eta(\theta(x)))\\
	&\overset{\eqref{eq: inverse equivariance cocycle}}{=}f(\alpha_\eta(\alpha_\theta(g_0,g_0^{-1}x), \alpha_\theta(g_0,g_0^{-1}x)^{-1}\theta(x)),\ldots)(\eta(\theta(x)))\\
	&=C^k(\eta)(f)(\alpha_\theta(g_0,g_0^{-1}x),\ldots)(\theta(x))\\
	&=C^k(\eta)(C^k(\theta)(f))(g_0,\ldots)(x)\qedhere
\end{align*}
\end{proof}

This finishes the proof of existence of various well-behaved cohomological invariants for orbit equivalence relations on Cantor space. Unfortunately, these invariants are typically infinite-dimensional and thus not directly computable. After studying product structures in the next section we will then connect with work of Shalom to see that at least for nilpotent groups $L^2$-cohomology can be identified with the ordinary cohomology of the acting group. This then allows to obtain the desired representation of the category $m{\mathcal C}(R)$ on the category of finite-dimensional vector spaces.
\section{Product structures}

\begin{lemma}\label{lem: multiplicativity cup product}
	Let $\eta =(\varphi_{\eta},\alpha_{\eta})$ be a morphism in the category $m{\mathcal C}(R)$ between $(\Gamma,X,\pi,\mu)$ and $(\Lambda,Y,\sigma,\nu)$. The induced map
	\[H^k(\eta)\colon H^\ast(\Lambda, L^\infty(Y,\nu,\Rb))\to H^\ast(\Gamma, L^\infty(X,\mu,\Rb))
	\]
	is multiplicative with the respect to the cup product.
\end{lemma}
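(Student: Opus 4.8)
The plan is to establish the identity already at the level of cochains, namely $C^{p+q}(\eta)(f\cup g)=C^p(\eta)(f)\cup C^q(\eta)(g)$ for homogeneous cochains $f\in C^p(\Lambda,L^\infty(Y,\nu,\Rb))$ and $g\in C^q(\Lambda,L^\infty(Y,\nu,\Rb))$, and then to pass to cohomology using that $C^\ast(\eta)$ is a chain map, as proved in Lemma~\ref{lem: induced map in cohomology}. Recall first that $L^\infty(Y,\nu,\Rb)$ is a commutative unital $\Rb$-algebra under pointwise multiplication, on which $\Lambda$ acts by algebra automorphisms, so that the cup product on the homogeneous bar complex is given by
\[
	(f\cup g)(h_0,\ldots,h_{p+q})(y)=f(h_0,\ldots,h_p)(y)\cdot g(h_p,\ldots,h_{p+q})(y),
\]
where $\cdot$ denotes pointwise multiplication of functions on $Y$ and the index $p$ is shared between the front and back faces.

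First I would record the elementary but decisive observation that $\Omega^{p+q}_\eta$ acts coordinate-wise in the group variables and diagonally, through $\phi_\eta$, in the base point. Concretely, in the defining formula the $i$-th group coordinate $\alpha_\eta(g_i,g_i^{-1}x)$ depends only on $g_i$ and $x$, never on its neighbours, so the ``front $p$-face'' of $\Omega^{p+q}_\eta(g_0,\ldots,g_{p+q},x)$ coincides with $\Omega^p_\eta(g_0,\ldots,g_p,x)$ and its ``back $q$-face'' with $\Omega^q_\eta(g_p,\ldots,g_{p+q},x)$, both carrying the \emph{same} coefficient slot $\phi_\eta(x)$. Second I would check that pullback along $\phi_\eta$ respects the coefficient ring structure: since $\phi_\eta\colon X\to Y$ is a measure-preserving homeomorphism, the map $\phi_\eta^\ast\colon L^\infty(Y,\nu,\Rb)\to L^\infty(X,\mu,\Rb)$, $u\mapsto u\circ\phi_\eta$, is a well-defined unital $\Rb$-algebra homomorphism, so in particular $(u\cdot v)\circ\phi_\eta=(u\circ\phi_\eta)\cdot(v\circ\phi_\eta)$.

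Combining the two observations, I would substitute $\Omega^{p+q}_\eta$ into the cup-product formula and factor the product of the two coefficient values through $\phi_\eta^\ast$, obtaining
\[
	C^{p+q}(\eta)(f\cup g)(g_0,\ldots,g_{p+q})(x)=C^p(\eta)(f)(g_0,\ldots,g_p)(x)\cdot C^q(\eta)(g)(g_p,\ldots,g_{p+q})(x),
\]
which is precisely $\bigl(C^p(\eta)(f)\cup C^q(\eta)(g)\bigr)(g_0,\ldots,g_{p+q})(x)$. Since this identity holds on the nose at the cochain level and $C^\ast(\eta)$ sends cocycles to cocycles and coboundaries to coboundaries, the induced map $H^\ast(\eta)$ intertwines the two cup products, which is the assertion of the lemma.

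I do not expect a genuine obstacle here: the computation is essentially formal, and the only points requiring care are the purely structural facts that $L^\infty$ really is a $\Lambda$-algebra under pointwise multiplication (so that the cup product is defined on these coefficients in the first place) and that the pullback $\phi_\eta^\ast$ is multiplicative. Both are standard. The same argument applies verbatim with the coefficient algebra $C(X,\Rb)$ on the topological category $\cC(R)$, so no separate treatment of that case is needed.
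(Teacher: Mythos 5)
Your proof is correct and follows exactly the paper's approach: the paper's own proof simply states that multiplicativity ``follows right away from the definition of the cup product on the chain level'' and displays the formula $(f\cup g)(h_0,\ldots,h_p,\ldots,h_{p+q},y)=f(h_0,\ldots,h_p,y)\,g(h_p,\ldots,h_{p+q},y)$. You have merely spelled out the details the paper leaves implicit --- that $\Omega^{p+q}_\eta$ acts coordinate-wise in the group variables with a shared coefficient slot $\phi_\eta(x)$, and that evaluation via $\phi_\eta^\ast$ is multiplicative --- so the cochain-level identity and hence the statement in cohomology follow.
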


\begin{proof}
This follows right away from the definition of the cup product on the chain level: For cocycles
$f\in C^p(\Lambda, L^\infty(Y,\nu,\Rb))$ and $g\in C^q(\Lambda, L^\infty(Y,\nu,\Rb))$ we have
\[(f\cup g)(h_0,\ldots, h_p, h_{p+1},\ldots, h_{p+q},y)=f(h_0,\ldots,h_p,y)g(h_p,\ldots, h_{p+q},y),\]
for all $h_0,\dots,h_{p+q} \in \Lambda$ and $y \in Y$.
\end{proof}

The cap product in group (co)homology~\cite{brown}*{p.~113} yields
a homomorphism
\[ H^k(\Gamma, L^2(X,\mu,\Rb))\otimes_\Rb H_k(\Gamma, L^2(X,\mu,\Rb))\to H_0(\Gamma, L^2(X,\mu, \Rb)\otimes_\Rb L^2(X,\mu,\Rb)).
\]
We compose this homomorphism with the map induced by the scalar product 
\[L^2(X,\mu,\Rb)\otimes_\Rb L^2(X,\mu,\Rb)\to \Rb\]
to obtain a pairing between cohomology and homology:
\begin{equation}\label{eq: pairing}
\langle\_,\_\rangle\colon H^k(\Gamma, L^2(X,\mu,\Rb))\otimes_\Rb H_k(\Gamma, L^2(X,\mu,\Rb))\to H_0(\Gamma,\Rb)=\Rb.
\end{equation}

This pairing is easily seen to descend to a pairing of reduced cohomology and homology (see also the explicit
formula~\eqref{eq: pairing explicit}):
\[ \langle\_,\_\rangle\colon \bar H^k(\Gamma, L^2(X,\mu,\Rb))\otimes_\Rb H_k(\Gamma, L^2(X,\mu,\Rb))\to H_0(\Gamma,\Rb)=\Rb
\]

\begin{lemma}\label{lem: induced map in homology}
Let $\eta =(\varphi_{\eta},\alpha_{\eta})$ be a morphism in the category $m{\mathcal C}(R)$ between $(\Gamma,X,\pi,\mu)$ and $(\Lambda,Y,\sigma,\nu)$.
For every $k\ge 0$ there is a homomorphism
	\[H_k(\eta)\colon H_k(\Gamma, L^2(X,\mu,\Rb))\to  H_k(\Lambda, L^2(Y,\nu,\Rb))
	\]
	that
	is adjoint to $H^k(\eta)$ in the sense that
	\[
		\langle H^k(\eta)(x),y\rangle=\langle x, H_k(\eta)(y)\rangle
	\]
	for every $x\in H^k(\Lambda, L^2(Y,\nu,\Rb))$ and
	every $y\in H_k(\Gamma, L^2(X, \mu,\Rb))$.
Similar statements holds for homology with coefficients in $C(X,\Zb), C(X,\Rb)$ and $L^2(X,\mu,\Zb)$, where the statement about adjointness only makes sense in presence of invariant measures.
\end{lemma}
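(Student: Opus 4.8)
The plan is to produce $H_k(\eta)$ from a chain map $C_k(\eta)$ which is literally the pushforward of homogeneous chains along the measure-preserving homeomorphism $\Omega^k_\eta$, dually to the fact that the cochain map of Lemma~\ref{lem: induced map in cohomology} is the pullback $f\mapsto f\circ\Omega^k_\eta$. I will use the homogeneous bar model $C_k(\Gamma,L^2(X,\mu,\Rb))=L^2(X,\mu,\Rb)\otimes_{\Zb\Gamma}\Zb[\Gamma^{k+1}]$, with $\Gamma$ acting diagonally on $\Gamma^{k+1}$, and record that the pairing~\eqref{eq: pairing} is computed on representatives by
\[
	\langle f,\,\xi\otimes(g_0,\ldots,g_k)\rangle=\int_X f(g_0,\ldots,g_k)(x)\,\xi(x)\,d\mu(x),
\]
which is well defined on the tensor product by equivariance of $f$ and $\Gamma$-invariance of $\mu$.

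First I would record the equivariance of $\Omega^k_\eta$. Combining the convoluted cocycle identity~\eqref{eq: cocycle property convoluted} with $\varphi_\eta(gx)=\alpha_\eta(g,x)\varphi_\eta(x)$ gives
\[
	\Omega^k_\eta(gg_0,\ldots,gg_k,gx)=\alpha_\eta(g,x)\cdot\Omega^k_\eta(g_0,\ldots,g_k,x),
\]
where $\lambda\cdot(\lambda_0,\ldots,\lambda_k,y)=(\lambda\lambda_0,\ldots,\lambda\lambda_k,\lambda y)$ is the diagonal $\Lambda$-action. Thus $\Omega^k_\eta$ intertwines the diagonal $\Gamma$-action with the diagonal $\Lambda$-action up to the cocycle, so pushforward descends to the respective coinvariants. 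Explicitly, for an elementary chain I would pick, as in the proof of Lemma~\ref{lem: induced map in cohomology}, the clopen partition $P$ of $X$ on which each $\alpha_\eta(g_i,g_i^{-1}\,\cdot\,)$ is constant, with value $\lambda_i^A$ on $A\in P$, and set
\[
	C_k(\eta)\bigl(\xi\otimes(g_0,\ldots,g_k)\bigr)=\sum_{A\in P}\bigl(\xi|_A\circ\varphi_\eta^{-1}\bigr)\otimes(\lambda_0^A,\ldots,\lambda_k^A),
\]
where $\xi|_A\circ\varphi_\eta^{-1}$ is extended by zero off $\varphi_\eta(A)$. This is a finite sum landing in $C_k(\Lambda,L^2(Y,\nu,\Rb))$, and $\varphi_{\eta*}\mu=\nu$ (equivalently $\Omega^k_{\eta*}\bar\mu=\bar\nu$) shows it preserves $L^2$-norms.

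The formal points are then quick. That $C_k(\eta)$ is a chain map follows because the family $(\Omega^k_\eta)_{k\ge0}$ is simplicial: a face operator only deletes a coordinate $g_i$, while $\Omega^k_\eta$ transforms the group coordinates independently and moves $x$ to $\varphi_\eta(x)$ uniformly, so pushforward commutes with $\partial=\sum_i(-1)^i\partial_i$; this is dual to the (clear) cochain statement in Lemma~\ref{lem: induced map in cohomology}. Passing to homology yields $H_k(\eta)$, and functoriality is proved verbatim as in Lemma~\ref{lem: functoriality} using the relation $\Omega^k_{\eta\circ\theta}=\Omega^k_\eta\circ\Omega^k_\theta$ (immediate from the composition formulas there). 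Adjointness is the change-of-variables identity for the measure-preserving bijection $\Omega^k_\eta$: on representatives,
\[
	\langle C^k(\eta)(f),c\rangle=\langle f\circ\Omega^k_\eta,c\rangle=\langle f,(\Omega^k_\eta)_*c\rangle=\langle f,C_k(\eta)(c)\rangle,
\]
the middle equality being the change-of-variables computation for $\Omega^k_\eta$, and since both $C^k(\eta)$ and $C_k(\eta)$ are (co)chain maps this descends to the pairing in (co)homology and further to reduced cohomology against homology. The variants with coefficients in $C(X,\Zb)$, $C(X,\Rb)$ and $L^2(X,\mu,\Zb)$ use the identical pushforward; here the adjointness statement needs an invariant measure precisely because the pairing~\eqref{eq: pairing} does. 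The only genuinely non-formal point, and the step I expect to be the main obstacle, is the well-definedness of $C_k(\eta)$ on coinvariants together with its landing in $L^2$: one must verify that the displayed pushforward is independent of the chosen diagonal-orbit representative $(g_0,\ldots,g_k)$ and of the refinement $P$, which is exactly where the cocycle-equivariance of $\Omega^k_\eta$ and the measure-preservation $\varphi_{\eta*}\mu=\nu$ are simultaneously used.
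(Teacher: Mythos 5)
Your proposal is correct and is in essence the paper's own proof: both define $H_k(\eta)$ by pushing chains forward along the measure-preserving homeomorphism $\Omega^k_\eta$, obtain the chain-map property and functoriality from the same identities used in Lemmas~\ref{lem: induced map in cohomology} and~\ref{lem: functoriality}, and get adjointness by change of variables against the chain-level pairing~\eqref{eq: pairing explicit}. The one real difference is the model of the chain complex, and it matters for exactly the step you flag as the main obstacle. You work directly in $L^2(X,\mu,\Rb)\otimes_{\Zb\Gamma}\Zb[\Gamma^{k+1}]$, so you must define the pushforward on elementary tensors via a choice of clopen partition, and are then left to check independence of the partition and of the diagonal-orbit representative. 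The paper instead first establishes a chain isomorphism $L^2(X,\mu,\Rb)\otimes_{\Zb\Gamma}C_*(\Gamma)\cong L^2(\Gamma^{*+1}\times X,\Rb)^f_\Gamma$, where $L^2(\Gamma^{*+1}\times X,\Rb)^f$ consists of $L^2$-functions essentially supported in $F\times X$ for some finite $F\subset\Gamma^{*+1}$; in that model the pushforward is literally precomposition $f\mapsto f\circ\Omega^k_{\eta^{-1}}$, with no choices at all, so well-definedness reduces to two soft checks (the support of $f\circ\Omega^k_{\eta^{-1}}$ lies in a finite set times $Y$ because $\Omega^k_\eta(F\times X)$ is compact, and square-integrability is preserved because $\Omega^k_{\eta^{-1}}$ is measure-preserving), while descent to coinvariants is verified exactly as in the cohomological Lemma~\ref{lem: induced map in cohomology}, via the same equivariance identity $\Omega^k_\eta(gg_0,\ldots,gg_k,gx)=\alpha_\eta(g,x)\cdot\Omega^k_\eta(g_0,\ldots,g_k,x)$ that you derived. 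Your partition formula does agree with the paper's map under this isomorphism (this is the content of identity~\eqref{eq: coycle id eta and eta inverse}), so your route goes through; the paper's reformulation is simply the device that makes the step you single out essentially automatic.
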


\begin{proof}
Let $C_\ast(\Gamma)$ be the (homogeneous) bar complex with
$C_k(\Gamma)=\Zb[\Gamma^{k+1}]$ endowed with the diagonal $\Gamma$-action.
To define $H_k(\eta)$ we rewrite the chain complexes $L^2(X,\mu,\Zb)\otimes_{\Zb\Gamma}C_\ast(\Gamma)$ and
$L^2(X,\mu,\Rb)\otimes_{\Zb\Gamma}C_\ast(\Gamma)$.
Let $L^2(\Gamma^{k+1}\times X,\Rb)^f$ be the subgroup of $L^2(\Gamma^{k+1}\times X,\bar \mu, \Rb)$ that consists of maps
$g\colon \Gamma^{k+1}\times X\to\Rb$  for which there is a finite subset
$F\subset\Gamma^{k+1}$ such that the essential support of $g$ is in $F\times X$.
Let $L^2(\Gamma^{k+1}\times X,\Zb)^f\subset L^2(\Gamma^{k+1}\times X,\Rb)^f$ be the subgroup of essentially $\Zb$-valued
functions. Then
$L^2(\Gamma^{\ast+1}\times X,\Zb)^f$ and $L^2(\Gamma^{\ast+1}\times X,\Rb)^f$  form chain complexes with respect to the differential
\[
	df(\gamma_0,\ldots,\gamma_k,x)=\sum_{i=0}^{k}(-1)^i\sum_{\gamma\in\Gamma}f(\gamma_0,\ldots,\gamma_{i-1},\gamma,\gamma_{i},\ldots, \gamma_k, x).
\]
The homomorphisms
\[
	L^2(X,\mu,\Zb)\otimes_{\Zb}C_\ast(\Gamma)\to L^2(\Gamma^{\ast+1}\times X,\Zb)^f
\]
that map $f\otimes (\gamma_0,\ldots,\gamma_k)$ to
\[
	(\gamma_0',\ldots,\gamma_k',x)\mapsto\begin{cases} f(x) & (\gamma_0',\ldots,\gamma_k')=(\gamma_0,\ldots,\gamma_n)\\
	                                                   0    & \text{otherwise}.
	\end{cases}
\]
are an equivariant chain isomorphism. A similar statement holds for complexes with real-valued functions. Thus we obtain
chain isomorphisms
\begin{gather*}
		L^2(X,\mu,\Zb)\otimes_{\Zb\Gamma}C_\ast(\Gamma)\xrightarrow{\cong} L^2(\Gamma^{\ast+1}\times X,\Zb)^f_\Gamma\\
			L^2(X,\mu,\Rb)\otimes_{\Zb\Gamma}C_\ast(\Gamma)\xrightarrow{\cong} L^2(\Gamma^{\ast+1}\times X,\Rb)^f_\Gamma
\end{gather*}
where the right hand sides are the co-invariants of $L^2(\Gamma^{\ast+1}\times X,\Zb)^f$ and
$L^2(\Gamma^{\ast+1}\times X,\Rb)^f$, respectively.

Let $\eta =(\varphi_{\eta},\alpha_{\eta})$ be a morphism in the category $m{\mathcal C}(R)$ between $(\Gamma,X,\pi,\mu)$ and $(\Lambda,Y,\sigma,\nu)$.
  Now we define
  \begin{gather*}
  		C_k(\eta)\colon L^2(\Gamma^{\ast+1}\times X,\Zb)^f\to L^2(\Lambda^{\ast+1}\times Y,\Zb)^f\\
  		C^k(\eta)(f)=f\circ\Omega^k_{\eta^{-1}},
  \end{gather*}
and similarly for the chain complex with real-valued functions. This is well defined: If $F\times X$ contains the support of $f$ for some
finite subset $F\subset \Gamma^{k+1}$, then $(\Omega^k_{\eta^{-1}})^{-1}(F\times X)=\Omega^k_\eta(F\times X)$ contains
the support of $C_k(\eta)(f)$. But $\Omega^k_\eta(F\times X)$ is compact, thus lies in the product of some finite subset
of $\Lambda^{k+1}$ with $Y$. Since $\Omega^k_{\eta^{-1}}$ is measure-preserving, it also preserves square-integrability.
Analogously to the proof of
Theorem~\ref{lem: induced map in cohomology} one easily verifies that $C_*(\eta)$ is a chain map and descends to the co-invariants.
So we obtain induced maps
\begin{equation*}
	H_\ast(\eta): H_\ast(\Gamma, L^2(X,\mu,\Zb))\to H_\ast(\Lambda, L^2(Y,\nu,\Zb))
\end{equation*}
and similarily for real coefficients.
The adjointness property readily follows from the fact that $\Omega_\eta^k$ is measure-preserving and
the fact that the pairing~\eqref{eq: pairing} is explicitly given on the chain level by
\begin{equation}\label{eq: pairing explicit}
C^k(\Gamma, L^2(X,\mu,\Rb))\otimes_\Rb  L^2(\Gamma^{k+1}\times X,\Rb)^f\to\Rb,~f\otimes g\mapsto \int_{\Gamma^{k+1}\times X}f(z)g(z)d\nu(z).
\qedhere
\end{equation}
\end{proof}

The existence of product structures is of independent interest. They will be used in the next section to finish the proof of Theorem \ref{thm: full group representation in abelian case}.

\section{Conclusion of proofs}

We now finish the proofs of the main results mentioned in Section~\ref{sec: orcat}.
\begin{proof}[Proof of Theorem~\ref{thm: full group representation in nilpotent case}]
We define a functor $\Psi \colon m{\mathcal C}(R)_{\rm nil} \to {\rm grRing}$ on the level of objects as $\Psi(\Gamma,X,\pi,\mu) = H^*(\Gamma,\Rb)$ and on the level of morphisms as follows:

Let $\eta =(\varphi_{\eta},\alpha_{\eta})$ be a morphism in the category $m{\mathcal C}(R)_{\rm nil}$ between $(\Gamma,X,\pi,\mu)$ and $(\Lambda,Y,\sigma,\nu)$.
We define $\Psi(\eta)=(\Psi^k(\eta))_{k\ge 0}$
by declaring $\Psi^k(\eta)$ to be
the unique homomorphism that makes the following diagram
commutative:
\begin{equation}\label{eq: L2-square}
	\xymatrix{
	\bar H^k(\Lambda, L^2(Y,\nu,\Rb))\ar[r]^{\bar H^k(\eta)}_\cong & \bar H^k(\Gamma, L^2(X,\mu,\Rb))\\
	H^k(\Lambda,\Rb)\ar[u]^\cong \ar[r]^{\Psi^k(\eta)}& H^k(\Gamma,\Rb) \ar[u]^\cong
	}
\end{equation}
The vertical maps are induced by inclusion of constant functions in the coefficients.
By~\cite{shalom}*{Theorem~4.1.3} and the ergodicity of $\mu$ we have
$\bar H^k(\Gamma,L^2_0(X,\mu,\Rb))=0$ for the mean value zero functions $L^2_0(X,\mu,\Rb)$.

Since $H^\ast(\Gamma,\Rb)$ is finite dimensional, e.g.~as a consequence of Nomizu's theorem~\cite{oprea}*{Theorem~4.1.3}, reduced cohomology
and (ordinary) cohomology coincide:
\[\bar H^k(\Gamma,\Rb)=H^k(\Gamma,\Rb)\]
Hence the vertical maps are isomorphisms.
Now Lemma~\ref{lem: functoriality} implies that $\Psi$ is functorial: Indeed, 
the functoriality of the lower horizontal arrows in~\eqref{eq: L2-square} follows 
immediately from the functoriality of the upper horizontal arrows since the vertical arrows are isomorphisms. 

Next we derive an explicit formula for $\Psi$. The right vertical map in~\eqref{eq: L2-square} has an obvious left inverse, namely the map induced
by integration in the coefficients, which we denote by
\begin{equation}\label{eq: integration L2 coefficients}
\int_X\colon \bar H^k(\Gamma, L^2(X,\mu,\Rb))\to \bar H^k(\Gamma, \Rb)=H^k(\Gamma, \Rb).
\end{equation}

Since the right vertical map in~\eqref{eq: L2-square} is an isomorphism the map $\int_X$ is its inverse. 
This implies that, for a cocycle $f: \Gamma^{k+1}\to\Rb$, $\Psi^k([f])$
is represented by the cocycle
\begin{equation}\label{eq: explicit formula}
	(g_0,\ldots, g_k)\mapsto \int_X f(\alpha_\eta(g_0, g_0^{-1}x),\ldots, \alpha_\eta(g_k,g_k^{-1}x))d\mu(x).
\end{equation}
We could have defined $\Psi$ via~\eqref{eq: explicit formula} right away but it would have been impossible to prove functoriality without using the fact that the vertical arrows in~\eqref{eq: L2-square} are isomorphisms. The reason we derived~\eqref{eq: explicit formula} after proving functoriality is that we need it now for proving 
that
$\Psi^\ast(\eta)$ is a homomorphism of cohomology algebras. 

To this end, we turn from cohomology with $L^2$-coefficients needed for proving functoriality to cohomology with $L^\infty$-coefficients needed for proving multiplicativity. The cohomology with $L^\infty$-coefficients is an algebra unlike the cohomology with $L^2$-coefficients. 

The map~\eqref{eq: integration L2 coefficients} has an analog for 
$L^\infty$-coefficients:
\begin{equation*}
\int_X\colon \bar H^k(\Gamma, L^\infty(X,\mu,\Rb))\to \bar H^k(\Gamma, \Rb)=H^k(\Gamma, \Rb).
\end{equation*}
From the explicit formula~\eqref{eq: explicit formula} we see that 
the following diagram commutes:
\begin{equation}\label{eq: Linfty-square}
	\xymatrix{
	 H^k(\Lambda, L^\infty(Y,\nu,\Rb))\ar[r]^{ H^k(\eta)} &  H^k(\Gamma, L^\infty(X,\mu,\Rb))\ar[d]^{\int_X}\\
	H^k(\Lambda, \Rb)\ar[r]^{\Psi^k(\eta)}\ar[u] & H^k(\Gamma, \Rb)}
\end{equation}
The left vertical map above is obviously multiplicative with respect to the cohomology algebra structures. By
Lemma~\ref{lem: multiplicativity cup product} $H^k(\eta)$ is
multiplicative. Since $\Gamma$ is finitely generated nilpotent, the map $\int_X$ is multiplicative
by~\cite{sauer}*{Proof of Theorem~5.1.}. If $\eta$ is coming from an
homomorphism from $\Gamma$ to $\Lambda$, then the cocycle $\alpha_\eta$ is constant, and
the explicit formula~\eqref{eq: explicit formula} shows that
$\Psi^k(\eta)$ is the usual map induced by the homomorphism and functoriality of group cohomology.
This finishes the proof.
\end{proof}

\begin{remark}
It is clear on the level of objects that the functor $\Psi$ makes no distinction between $(\Gamma,X,\pi,\mu_1)$ and $(\Gamma,X,\pi,\mu_2)$ if $\mu_1, \mu_2$ are ergodic $\Gamma$-invariant probability measures on $X$. It is unclear if this is also true on the level of morphisms.
\end{remark}


\begin{proof}[Proof of Theorem~~\ref{thm: full group representation in abelian case}]
    By Theorem~\ref{thm: full group representation in nilpotent case}
    there is a homomorphism
    \[\Psi^\ast\colon \aut_\mu([[X,\Zb^d]])\to \aut(H^\ast(\Zb^d, \Rb))\]
    extending the natural homomorphism from $\aut_X(\Zb^d)\subset\gl_d(\Zb)$ to
    $\aut(H^\ast(\Zb^d, \Rb))$. The stated homomorphism of
    Theorem~\ref{thm: full group representation in abelian case} is
    $\Psi^1$. Note that $H^1(\Zb^d,\Rb)\cong \Rb^d$. It remains to
    be shown that $\Psi^1(\eta): H^1(\Zb^d,\Rb)\to H^1(\Zb^d,\Rb)$
    has determinant $\pm 1$ for every $\eta\in\aut_\mu([[X,\Zb^d]])$.

    It is well known that the cohomology algebra of $\Zb^d$ is the exterior algebra over the vector space $H^1(\Zb^d;\Rb)$:
	\begin{equation}\label{eq: exterior algebra}
		H^\ast(\Zb^d;\Rb)\cong \Lambda_\Rb^\ast H^1(\Zb^d;\Rb)
	\end{equation}
	Since $\Psi^\ast(\eta)$ is multiplicative with respect to the
	cup product, the homomorphism
	\[\Rb\cong H^d(\Zb^d,\Rb)\xrightarrow{\Psi^d(\eta)} H^d(\Zb^d,\Rb)\cong\Rb\]
	is given by multiplication with $\det(\Psi^d(\eta))$, which
	is to be shown to lie in $\{1,-1\}$. To this end,
	we show that $\Psi^d(\eta)$ preserves the
	integral lattice $H^d(\Zb^d,\Zb)\subset H^d(\Zb^d,\Rb)$. The pairing
	between cohomology and homology and the fact that $\Zb^d$ is a
	Poincare duality group are the essential ingredients.
	
	Let $a_d\in H^d(\Zb^d, \Zb)\cong\Zb$ be a cohomological
	fundamental class, that is, a generator. Let $x_d\in H_d(\Zb^d,\Zb)$
	be the dual generator. Let $c_d\in \bar H^d(\Zb^d, L^2(X,\mu,\Rb))$
	be the image of $a_d$ under the map induced by inclusion of
	constant functions
	$H^d(\Zb^d, \Rb)\to \bar H^d(\Zb^d,L^2(X,\mu,\Rb))$.
	Recall that the latter map is an isomorphism according to the
	proof of
	Theorem~\ref{thm: full group representation in nilpotent case}.
	In particular, $c_d$ generates the $1$-dimensional vector space $\bar H^d(\Zb^d, L^2(X,\mu,\Rb))$.
	Similarly, we consider the images $y_d\in H_d(\Zb^d,L^2(X,\mu,\Zb))$ and
	$z_d\in H_d(\Zb^d, L^2(X,\mu,\Rb))$ of $x_d$. By Poincare duality
	and ergodicity we obtain that
	\begin{align*}
	H_d(\Zb^d,L^2(X,\mu,\Zb))&\cong H^0(\Zb^d, L^2(X,\mu,\Zb))\cong L^2(X,\mu,\Zb)^{\Zb^d}=\Zb,\\
	H_d(\Zb^d,L^2(X,\mu,\Rb))&\cong H^0(\Zb^d, L^2(X,\mu,\Rb))\cong L^2(X,\mu,\Rb)^{\Zb^d}=\Rb.
	\end{align*}
    So $y_d$ is $\Zb$-module generator, and $z_d$ is a $\Rb$-module
    generator. Since
    $$H_d(\eta)\colon H_d(\Zb^d, L^2(X,\mu,\Rb))\to H_d(\Zb^d, L^2(X,\mu,\Rb))$$ restricts
    to $H_d(\eta)\colon H_d(\Zb^d, L^2(X,\mu,\Zb))\to H_d(\Zb^d, L^2(X,\mu,\Zb))$ by Lemma~\ref{lem: induced map in homology},
    there is $m\in\Zb$ such that $H_d(\eta)(y_d)=m\cdot y_d$,
    thus also $H_d(\eta)(z_d)=m\cdot z_d$.
	Let $r\in\Rb\backslash\{0\}$ be the real number so that
	\[ H^d(\eta)(c_d)=r\cdot c_d. \]
	By the commutativity of the diagram~\eqref{eq: L2-square}, 
	for $\Psi^d(\eta)$ to preserve the integral lattice it suffices to show that $r\in \{1,-1\}$. Clearly, we have
	$\langle c_d, z_d\rangle=1$, and thus by
	Lemma~\ref{lem: induced map in homology}
	\[ r=\langle H^d(\eta)(c_d), z_d\rangle=\langle c_d, H_d(\eta)(z_d)\rangle=\langle c_d, m\cdot z_d\rangle=m\in\Zb.\]
	
    By an analogous argument for the inverse $\Psi^{-1}$
    one also get $1/r\in\Zb$ from which $r\in\{1,-1\}$ follows.
\end{proof}

\begin{proof}[Proof of Theorem \ref{thm: realization}]
	First of all, we argue that each matrix $A \in GL_d(\Rb)$ with $\det(A) = \pm 1$ gives rise to a bi-Lipschitz equivalence of $\Zb^d$, which is unique up to bounded distance. We make use of the group structure of $GL_d(\Rb)$ at this point. Every matrix in $\{g \in GL_d(\Rb) \mid \det(g) = \pm 1\}$ is a product of elementary matrices and the matrix ${\rm diag}(-1,1,\dots,1)$. For each of the these matrices, it is obvious that a bi-Lipschitz equivalence of $\Zb^d$ exists, which is at bounded distance. Indeed, each elementary matrix is a shearing transformation essentially of the form $(x,y) \to (x,y+\lambda x)$ for some $\lambda \in \Rb$. This map can be approximated by $(x,y) \to (x,y+\lfloor \lambda x \rfloor)$, which is bijective on $\Zb^2$ and bounded distance to the original map. For $A\in GL_d(\Rb)$ let $f_A\colon \Zb^d\to \Zb^d$ be a choice of a bi-Lipschitz
equivalence which is bounded distance from $A$.
  We now follow the construction in the proof of Theorem~\ref{main1} which produces a topological orbit equivalence between two actions of $\Zb^d$ on a Cantor set~$X$ out of $f_A$; both actions are minimal Cantor systems. We denote the orbit equivalence relation on ~$X$ by~$R_A$. The orbit equivalence yields a morphism $\eta$ in the category $m{\mathcal C}(R_A)$. One easily verifies by inspecting the construction in Theorem~\ref{main1} that for the associated
  cocycle $\alpha_\eta$ there is a constant $C>0$ such that
  \begin{equation}\label{eq: uniformly close}
      |\alpha_\eta(g, x)-A(g)|<C~~~\text{ for all $g\in \Zb^d$ and $x\in X$}.
  \end{equation}
The map $\Psi^1(\eta)$ is, on the level of homogeneous cochains, given by the formula~\eqref{eq: explicit formula}.
In the inhomogeneous picture, a $1$-cochain is a map $f\colon \Zb^d\to\Rb$ (corresponding to the invariant homogeneous $1$-cochain $(g_0,g_1)\mapsto
f(g_1-g_0)$). Moreover, $1$-cocycles are exactly the homomorphisms $\Zb^d\to \Rb$. The map $\Psi^1(\eta)$ for inhomogeneous $1$-cochains is given by
\[\bigl(f\colon\Zb^d\to\Rb\bigr) \mapsto \bigl( g\to \int_X f(\alpha_\eta(g, g^{-1}x))d\mu(x)\bigr).\]
Since cocycles are mapped to cocycles, the right hand side is a homomorphism $\Zb^d\to\Rb$ provided $f$ is a homomorphism.
Further, the map
\[\bigl(f\colon\Zb^d\to\Rb\bigr) \to f(A(g))\]
induces multiplication with $A^t$ on the first cohomology. Now let $f:\Zb^d\to\Rb$ be a homomorphism. Since $f$ is given by taking the standard scalar product with a real vector, \eqref{eq: uniformly close} implies that there is a constant $D>0$ such that for every $g\in \Zb^d$ we have
\[ |\int_X f(\alpha_\eta(g, g^{-1}x))d\mu(x)-f(A(g))|<D. \]
Since homomorphisms $\Zb^d\to\Rb$ that are a bounded distance apart have to be identical, the claim follows.
\end{proof}

\section{Further directions} \label{sec: f	d}

It would be worthwile to study category-theoretical concepts in ergodic 
theory, in particular in orbit equivalence theory, further. The measured orbit category is only a starting point. 
Indeed, it is fairly straightforward to see that the actual structure that should be considered instead of the measured orbit equivalence category is a strict 2-category. Here, the underlying (classical) category is just the measured orbit equivalence category and the 2-morphisms between ergodic-theoretical cocycles are given by the usual notion of equivalence in the $1$-cohomology for such cocycles. This behaves well with the structures that we explored so far. In particular, if two ergodic-theoretical cocycles define the same $1$-cohomology class, then the induced map on group cohomology is the same. We also obtain that various homomorphisms from automorphism groups are actually well-defined on a suitable outer automorphism group. This will all be subject of further study.

Another direction is the question of possible generalizations of the important result of Boyle-Tomiyama \cite{MR1613140} that topologically orbit equivalent Cantor minimal $\mathbb Z$-actions are flip conjugate. Consider a Cantor minimal $\mathbb Z^d$-action. At least in the uniquely ergodic case, any orbit equivalence $\alpha \colon \mathbb Z^d \times X \to \mathbb Z^d$ gives rise due to our construction to a uniquely determined matrix in $\{ A \in GL_d(\mathbb R), \det(A) = \pm 1\}$. Let us denote this invariant by $A(\alpha)$ for now. It is natural to ask if the cocycle, considered as a map $X \to {\rm Bij}(\mathbb Z^d,\mathbb Z^d)$ is uniformly close in the euclidean distance to the map which is given by multiplication with the matrix $A(\alpha)$. Here, ${\rm Bij}(\mathbb Z^d,\mathbb Z^d)$ denotes the space of zero-preserving (bi-Lipschitz) bijections.

In any case, minimal subsystems of ${\rm Bij}(\mathbb Z^d,\mathbb Z^d)$ with respect to the natural $\mathbb Z^d$-action seem to be a fundamental object of study.

\section*{Acknowledgments}
This research was started and the first sections were completed during a visit at the AIM in Palo Alto in September 2011. We thank this institution for its warm hospitality. K.M.\ was supported  by the NSA grant  H98230CCC5334. A.T.\ was supported by ERC Starting Grant No.\ 277728.

We thank the referee for a very careful and helpful report.

\end{document}